\newtheorem{thm}{Theorem}
\newtheorem{lem}[thm]{Lemma}
\newtheorem{cor}[thm]{Corollary}
\theoremstyle{definition}
\newtheorem{de}{Definition}
\newtheorem{ex}{Example}
\theoremstyle{remark}
\author{Penying Rochanakul}
\title{$k$-Geometric Mean Graphs}
\begin{document}

\maketitle

\begin{abstract}
A finite, simple and undirected graph $G = (V, E)$ with $p$ vertices and $q$ edges is said to be a \emph{$k$-geometric mean graph} for a positive integer $k$ if there is an injection $\psi :V(G)\to \{k,k+1,\dots,k+q\}$ such that, when each edge $uv\in E(G)$ is assigned the label $\lfloor\sqrt{\psi(u)\psi(v)}\rfloor$ or $\lceil\sqrt{\psi(u)\psi(v)}\rceil$, the resulting edge label set is $\{k,k+1,...,k+q-1\}$ and $\psi$ is called a \emph{$k$-geometric mean labeling} of $G$. The special case $k=1$, a $1$-geometric mean labeling is called a geometric mean labeling, and a $1$-geometric mean graph is called a geometric mean graph. 

In this paper, we present new classes of geometric mean graphs. Then we introduce $k$-geometric mean labeling and prove some classes of graphs are $k$-geometric mean. We also study some classes of finite join of graphs that admit geometric mean labeling.
\end{abstract}

\section{Introduction}
In this paper, we let $G$ be a finite, simple and undirected graph with vertex set $V(G)$ and edge set $E(G)$, where $|V(G)|=p$ and $|E(G)|=q$. 

Geometric mean labeling was first introduced by \citeauthor{som1} (2011) in  \cite{som1}. Many classes of geometric mean graphs are studied in \cite{som1}, \cite{som2}, \cite{som4}, \cite{som5}, \cite{som3}, and \cite{som6}. \citeauthor{durai} give a nice motivation and introduce the concept of $\mathcal{F}-$geometric mean labeling, where only the flooring function is used, in \cite{durai}. We here investigate further into new classes of geometric mean graphs and a finite join of them.

\begin{de}Let $G = (V, E)$ be a graph with $p$ vertices and $q$ edges. 

The graph $G$ is said to be a \emph{geometric mean graph} if there is an injection $\psi :V(G)\to \{1,2,\dots,q+1\}$ such that, when each edge $uv\in E(G)$ is assigned the label $\lfloor\sqrt{\psi(u)\psi(v)}\rfloor$ or $\lceil\sqrt{\psi(u)\psi(v)}\rceil$, the resulting edge label set is $\{1,2,...,q\}$ and $\psi$ is called a \emph{geometric mean labeling} of $G$. 
\end{de}

Here $P_n, C_n, K_n$, and $S_n$ are the path on $n$ vertices, cycle on $n$ vertices, complete graph on $n$ vertices, and star graph on $n+1$ vertices, respectively.

\begin{ex}
A geometric mean labeling of $C_5$ is given below.
\end{ex}
\begin{center}
\begin{tikzpicture}[scale=.5]
                            inner sep=0pt]
    \draw (0,0) node[draw,circle,fill=white,minimum size=4pt,
                                inner sep=0pt] (A) [label=above:$1$] {}
         ++(216:3.5cm) node[draw,circle,fill=white,minimum size=4pt,
                                     inner sep=0pt] (B) [label=left:$2$] {}
         ++(288:3.5cm) node[draw,circle,fill=white,minimum size=4pt,
                                     inner sep=0pt] (C) [label=below left:$3$] {}
         ++(0:3.5cm) node[draw,circle,fill=white,minimum size=4pt,
                                     inner sep=0pt] (D) [label=below right:$4$] {}
         ++(72:3.5cm) node[draw,circle,fill=white,minimum size=4pt,
                                     inner sep=0pt] (E) [label=right:$5$] {}
         ++(120:2cm) node  (1) [label=right:{$\left\lceil\sqrt{1\cdot 5}\right\rceil$=(3)}] {}
		++(285:3.8cm) node  (2) [label=right:{$\left\lceil\sqrt{4\cdot 5}\right\rceil$=(5)}] {}
		++(215:3.5cm) node  (3) [label=below:{$\left\lceil\sqrt{3\cdot 4}\right\rceil$=(4)}] {}
		++(145:3.5cm) node  (4) [label=left:{$\left\lfloor\sqrt{2\cdot 3}\right\rfloor$=(2)}] {}
		++(75:3.8cm) node  (5) [label=left:{$\left\lfloor\sqrt{1\cdot 2}\right\rfloor$=(1)}] {}
        ;

  \path (A) edge [bend right]   (B)
        (B)	edge [bend right]   (C)
        (C) edge [bend right]   (D)
        (D)	edge [bend right]   (E)
        (E) edge [bend right]   (A);
\end{tikzpicture}
\end{center}

\begin{de}The \emph{union} $G_1\cup G_2$ of two graphs $G_1$ and $G_2$ is the graph $G$ with $V(G)=V(G_1)\cup V(G_2)$ and $E(G)=E(G_1)\cup E(G_2)$.
\end{de}

\begin{de}The \emph{join} $G_1 + G_2$ of two graphs $G_1$ and $G_2$ with disjoint set of vertices is the union of two graphs $G_1$ and $G_2$.
\end{de}

\begin{de}The \emph{corona} $G_1 \odot G_2$ of two graphs $G_1$ with $p$ vertices and $G_2$ is the graph $G$ obtained by taking one copy of $G_1$ and $p$ copies of $G_2$ and then joining the $i^{th}$ vertex of $G_1$ to every vertices in the $i^{th}$ copy of $G_2$.
\end{de}

\begin{thm}[\cite{som1}] 
Any path  is a geometric mean graph.
\end{thm}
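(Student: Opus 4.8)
The plan is to exhibit an explicit labeling, namely the ``identity'' labeling along the path, and then verify that the flooring choice works uniformly on every edge. Write $P_n$ with vertices $v_1, v_2, \dots, v_n$ in path order, so that $p = n$, $q = n-1$, and the required codomain of $\psi$ is $\{1, 2, \dots, q+1\} = \{1, 2, \dots, n\}$. I would simply set $\psi(v_i) = i$ for $i = 1, \dots, n$; this is visibly an injection onto $\{1, \dots, n\}$, so the vertex-labeling condition holds automatically.

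Next I would analyze the edge labels. The edges of $P_n$ are $v_i v_{i+1}$ for $i = 1, \dots, n-1$, and such an edge carries $\psi(v_i)\psi(v_{i+1}) = i(i+1)$. The key elementary observation is the chain of inequalities $i^2 < i(i+1) < (i+1)^2$, valid for every $i \ge 1$, which gives $i < \sqrt{i(i+1)} < i+1$ and hence $\lfloor \sqrt{i(i+1)} \rfloor = i$ (while $\lceil \sqrt{i(i+1)} \rceil = i+1$). Choosing the floor option on every edge therefore assigns the edge $v_i v_{i+1}$ the label $i$, so the edge label set is exactly $\{1, 2, \dots, n-1\} = \{1, \dots, q\}$, as required.

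Finally I would dispose of the degenerate cases: $P_1$ has no edges and the single vertex gets label $1$, and $P_2$ is covered by the $i=1$ instance above. So $\psi$ is a geometric mean labeling of $P_n$ for every $n \ge 1$.

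There is really no serious obstacle here: the only thing to check is the two-sided inequality $i^2 < i(i+1) < (i+1)^2$, which pins $\sqrt{i(i+1)}$ strictly between consecutive integers and lets the flooring function do all the work. The ``hard part'', such as it is, is merely recognizing that the naive consecutive-integer labeling already succeeds, so that no case analysis or mixing of floor and ceiling is needed.
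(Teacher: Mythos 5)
Your proposal is correct and is essentially the same argument the paper uses (its Theorem~\ref{path} on $k$-geometric mean labelings of paths specializes at $k=1$ to exactly your consecutive labeling $\psi(v_i)=i$ with the floor taken on every edge). The verification via $i^2 < i(i+1) < (i+1)^2$ is the right and complete justification that each edge $v_iv_{i+1}$ receives label $i$.
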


\begin{thm}[\cite{som1}]
Any cycle is a geometric mean graph.
\end{thm}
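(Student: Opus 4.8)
The plan is to exhibit one explicit labeling that handles every $C_n$ with $n\ge 3$ at once, with essentially no case analysis. Write the cycle as $v_1v_2\cdots v_nv_1$ (so $q=n$) and take the obvious vertex labeling $\psi(v_i)=i$ for $1\le i\le n$; this is an injection into $\{1,\dots,n+1\}$ that simply omits the value $n+1$, as the definition demands. The whole difficulty is that the ``closing'' edge $v_nv_1$ only sees the small product $n\cdot 1=n$, so it cannot carry a large edge-label; the trick is that the floor/ceiling freedom on the other $n-1$ edges is enough to absorb this and still produce the full set $\{1,\dots,n\}$.

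First I would record the elementary fact that $i<\sqrt{i(i+1)}<i+1$ for every integer $i\ge 1$, so that $\lfloor\sqrt{i(i+1)}\rfloor=i$ and $\lceil\sqrt{i(i+1)}\rceil=i+1$; hence each edge $v_iv_{i+1}$ with $1\le i\le n-1$ may be assigned, at our discretion, either the label $i$ or the label $i+1$, while the closing edge $v_nv_1$ may be assigned $\lfloor\sqrt n\rfloor$ or $\lceil\sqrt n\rceil$. Now put $t=\lceil\sqrt n\rceil$ and prescribe: label $v_iv_{i+1}$ by $i$ for $1\le i\le t-1$; label $v_iv_{i+1}$ by $i+1$ for $t\le i\le n-1$; and label $v_nv_1$ by $t$. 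The first block contributes the labels $\{1,\dots,t-1\}$, the closing edge contributes $\{t\}$, and the second block contributes $\{t+1,\dots,n\}$, so the resulting edge-label set is exactly $\{1,\dots,n\}$ and $\psi$ is a geometric mean labeling of $C_n$.

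The step I would treat as the crux is verifying that this recipe is internally consistent for all $n\ge 3$, i.e. that $2\le t\le n-1$, so that the two index ranges above are nonempty, disjoint, and together with the closing edge cover $\{1,\dots,n\}$ exactly once. The bound $t\ge 2$ is clear, and $t\le n-1$ amounts to $\sqrt n\le n-1$, i.e. $n^2-3n+1\ge 0$, which holds for every $n\ge 3$; the extreme case $t=n-1$ arises only for $n=3$, and when $n$ is a perfect square $t=\sqrt n$ is forced on the closing edge anyway, so no conflict occurs. Finally I would spot-check $C_3$, $C_4$ and $C_5$ against the prescription — for $C_5$ one gets $t=3$ and recovers the labeling displayed in the Example above — to confirm the boundary behaviour, which completes the argument.
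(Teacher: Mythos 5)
Your proposal is correct and follows essentially the same route the paper takes for this result: the paper cites it from \cite{som1} and reproves the generalization in Theorem~\ref{cycle} ($k$-geometric mean cycles), whose $k=1$ case is exactly your labeling --- consecutive labels $1,\dots,n$, the closing edge $v_nv_1$ carrying the threshold value, floors on the edges below the threshold and ceilings above. Your choice $t=\lceil\sqrt{n}\rceil$ even uses the true product of the closing edge (the paper's $h=\lceil\sqrt{k(k+n)}\rceil$ should read $\lceil\sqrt{k(k+n-1)}\rceil$), so your verification that $2\le t\le n-1$ is the cleaner version of the same argument.
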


\begin{de}A \emph{comb} is the graph $P_n \odot K_1$.
\end{de}

\begin{thm}[\cite{som1}]
Any comb is a geometric mean graph.
\end{thm}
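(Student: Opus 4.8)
The plan is to exhibit one explicit labeling and verify it directly. Write the comb $P_n\odot K_1$ with spine (path) vertices $u_1,u_2,\dots,u_n$, so that $u_iu_{i+1}\in E(G)$ for $1\le i\le n-1$, and pendant vertices $v_1,\dots,v_n$, where $v_i$ is adjacent only to $u_i$. Then $p=2n$ and $q=2n-1$, so a geometric mean labeling must be a bijection $\psi:V(G)\to\{1,2,\dots,2n\}$ whose induced edge label set is $\{1,2,\dots,2n-1\}$.

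First I would set $\psi(v_i)=2i-1$ and $\psi(u_i)=2i$ for $1\le i\le n$; as $i$ runs over $1,\dots,n$ these values are clearly distinct and exhaust $\{1,\dots,2n\}$, so $\psi$ is a bijection. Next, for each pendant edge $u_iv_i$ I would use the flooring function: since $(2i-1)^2<(2i-1)(2i)<(2i)^2$, we get $\lfloor\sqrt{\psi(u_i)\psi(v_i)}\rfloor=2i-1$, so the pendant edges receive exactly the odd labels $1,3,\dots,2n-1$. Then, for each spine edge $u_iu_{i+1}$ I would again use the flooring function: $\sqrt{\psi(u_i)\psi(u_{i+1})}=2\sqrt{i(i+1)}$ lies strictly between $2i$ and $2i+1$ because $4i^2<4i(i+1)<(2i+1)^2$, hence that edge gets label $2i$, and the spine edges receive exactly the even labels $2,4,\dots,2n-2$. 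Taking the union, the induced edge label set is $\{1,2,\dots,2n-1\}$, as required; in fact this is even an $\mathcal{F}$-geometric mean labeling, since only the flooring function was used.

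The argument is entirely constructive, so there is no real obstacle: the only things to be careful about are the two elementary inequalities that pin down the floors, and the small/degenerate cases (for instance $n=1$, where the comb is just $K_2$, and checking that the first and last blocks create no conflict with the endpoints of the spine), all of which are immediate. The one genuine design idea — placing the odd label of each consecutive pair $\{2i-1,2i\}$ on the pendant vertex and the even label on the spine vertex — is exactly what makes the odd labels and the even labels split cleanly between the pendant edges and the spine edges.
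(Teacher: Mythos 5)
Your labeling is correct and verifies exactly as you claim, and it is essentially the same construction the paper uses for its $k$-geometric mean labeling of combs (Theorem \ref{comb} with $k=1$): consecutive label pairs $\{2i-1,2i\}$ placed on each tooth, producing the odd labels on pendant edges and the even labels on spine edges. The only cosmetic differences are that the paper puts the odd label on the spine vertex and the even label on the pendant vertex and uses the ceiling on spine edges, whereas you swap the pair and get by with the flooring function alone, which, as you note, even yields an $\mathcal{F}$-geometric mean labeling.
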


\begin{de}A \emph{crown} is the graph $C_n \odot K_1$.
\end{de}

\begin{thm}[\cite{som2}]
Any crown is a geometric mean graph.
\end{thm}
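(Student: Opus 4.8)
The plan is to exhibit one explicit geometric mean labeling that covers all $n\ge 3$ at once. Write the cycle of the crown $C_n\odot K_1$ as $u_1u_2\cdots u_nu_1$ and let $v_i$ be the pendant vertex joined to $u_i$, so that $p=q=2n$ and we must injectively label the $2n$ vertices from $\{1,\dots,2n+1\}$ and realise every edge label in $\{1,\dots,2n\}$. I would simply take $\psi(u_i)=2i$ and $\psi(v_i)=2i-1$ for $1\le i\le n$; this is visibly an injection whose image is $\{1,\dots,2n\}$, with $2n+1$ left unused.

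The rounding is then controlled by three elementary inequalities. For $i\ge 1$ we have $(2i-1)^2<\psi(u_i)\psi(v_i)=2i(2i-1)<(2i)^2$, so the pendant edge $u_iv_i$ may be assigned either $2i-1$ (flooring) or $2i$ (ceiling). For $1\le i\le n-1$ we have $(2i)^2<\psi(u_i)\psi(u_{i+1})=4i(i+1)<(2i+1)^2$, so the cycle edge $u_iu_{i+1}$ may be assigned either $2i$ or $2i+1$. Finally, setting $m=\lfloor 2\sqrt n\rfloor$, we have $m\le\sqrt{\psi(u_n)\psi(u_1)}=\sqrt{4n}$, so the wrap-around edge $u_nu_1$ may be assigned the label $m$; note $m\ge 3$ because $n\ge 3$.

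With these in hand I would reserve $m$ for the edge $u_nu_1$ and split the labels $\{1,\dots,2n\}\setminus\{m\}$ between the remaining edges by flooring the ``small'' ones and ceiling the ``large'' ones, the switch happening at the index $t=\lfloor m/2\rfloor$, which is where $2\sqrt n$ sits. Concretely: assign $u_iv_i\mapsto 2i-1$ for $i\le t$ and $u_iv_i\mapsto 2i$ for $i\ge t+1$; and assign the cycle edges $u_1u_2,\dots,u_{n-1}u_n$ the value $2i$ for the small indices and $2i+1$ for the large ones, with the cut taken at $i\le t$ or $i\le t-1$ according to the parity of $m$. A short count (carried out once for $m$ even and once for $m$ odd) then shows that the pendant labels together with these cycle labels make up exactly $\{1,\dots,2n\}\setminus\{m\}$, so adding $u_nu_1\mapsto m$ realises every label of $\{1,\dots,2n\}$ precisely once; since each assigned value lies in the allowed two-element window recorded above, this is a legitimate geometric mean labeling.

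The delicate point — and essentially the only place any care is needed — is this bookkeeping at the cut index $t$: one must verify that changing from ``floor'' to ``ceiling'' near $t$ omits the value $m$ exactly once and neither duplicates nor skips any other value, and that $1\le t$ and $t+1\le n$ so that the description even makes sense; all of this follows from $3\le m\le 2\sqrt n$ together with $n\ge 3$. By contrast, injectivity of $\psi$ and the admissibility of each individual rounding are immediate from the three displayed inequalities, so the argument contains no hard estimate, only a routine split into two parity cases.
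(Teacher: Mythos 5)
Your proposal is correct and follows essentially the same route as the paper's own construction (its Theorem on $k$-geometric mean crowns, specialized to $k=1$): the identical vertex labeling $\psi(u_i)=2i$, $\psi(v_i)=2i-1$, the wrap-around edge $u_nu_1$ reserved for the rounded value of $\sqrt{4n}$ (you take $\lfloor\sqrt{4n}\rfloor$ where the paper takes $\lceil\cdot\rceil$, an immaterial difference), and the same floor/ceiling split of the remaining pendant and cycle edges at a cut index with a two-case parity check. The bookkeeping you outline at the cut index does close up exactly as you claim, so no gap remains.
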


\begin{de}A \emph{triangular snake} $T_n$ is obtained from replacing each edge in $P_n$ with $C_3$.
\end{de}

\begin{thm}[\cite{som3}]
Any triangular snake $T_n$ is a geometric mean graph.
\end{thm}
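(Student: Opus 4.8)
The plan is to give a single explicit labeling that uses only the flooring function and works uniformly in $n$. Write the underlying path as $P_n$ with vertices $v_1,\dots,v_n$, and for $1\le i\le n-1$ let $u_i$ be the apex vertex of the triangle sitting on the edge $v_iv_{i+1}$; thus $T_n$ has $p=2n-1$ vertices and $q=3(n-1)$ edges, so a geometric mean labeling must send $V(T_n)$ injectively into $\{1,2,\dots,3n-2\}$. I would take
\[
\psi(v_i)=3i-2 \quad (1\le i\le n), \qquad \psi(u_i)=3i \quad (1\le i\le n-1).
\]
The values $\psi(v_i)$ are exactly the integers $\equiv 1\pmod 3$ in $\{1,\dots,3n-2\}$ and the values $\psi(u_i)$ are exactly the nonzero integers $\equiv 0\pmod 3$ in that range, so $\psi$ is an injection into $\{1,\dots,q+1\}$. (For $n=1$ the graph is $K_1$, trivially geometric mean, so one may assume $n\ge 2$.)

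Next I would compute the three edge labels of the $i$-th triangle, the point being that each relevant product lies strictly between two consecutive squares, forcing the floor. Concretely: $(3i-2)^2\le 9i^2-6i<(3i-1)^2$, so $\lfloor\sqrt{\psi(v_i)\psi(u_i)}\rfloor=3i-2$; $(3i-1)^2\le 9i^2-3i-2<(3i)^2$, so $\lfloor\sqrt{\psi(v_i)\psi(v_{i+1})}\rfloor=3i-1$; and $(3i)^2<9i^2+3i<(3i+1)^2$, so $\lfloor\sqrt{\psi(u_i)\psi(v_{i+1})}\rfloor=3i$. Hence the edges of the $i$-th triangle get labels $\{3i-2,\,3i-1,\,3i\}$, and letting $i$ run over $1,\dots,n-1$ the edge label set is $\bigcup_{i=1}^{n-1}\{3i-2,3i-1,3i\}=\{1,2,\dots,3n-3\}=\{1,\dots,q\}$, as required.

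I do not expect a genuine obstacle here: the only thing to verify is the three two-sided inequalities above, and each collapses to a linear inequality valid for all $i\ge 1$ (for instance $(3i-2)^2\le 9i^2-6i \iff 6i\ge 4$), so the argument is mechanical once the labeling is written down. The real content is simply guessing the arithmetic-progression assignment $v_i\mapsto 3i-2$, $u_i\mapsto 3i$; as a sanity check I would confirm the small cases $T_2=C_3$ and $T_3$ by hand first. Finally, since every edge label is produced by $\lfloor\cdot\rfloor$ alone, it is worth remarking that this argument in fact shows $T_n$ is an $\mathcal{F}$-geometric mean graph in the sense of \cite{durai}.
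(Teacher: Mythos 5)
Your proof is correct and takes essentially the same route as the paper's proof of its $k$-geometric mean generalization (Theorem~\ref{trisnake}, whose case $k=1$ is this statement): an explicit arithmetic-progression labeling along the snake in which the $i$-th triangle receives the edge labels $3i-2$, $3i-1$, $3i$, verified by elementary comparisons with consecutive squares. The only difference is minor: the paper gives the apex vertices the labels $3i-1$ and needs a ceiling on the apex-to-next-path-vertex edges, whereas your choice $3i$ lets every edge use the floor, which incidentally yields the slightly stronger $\mathcal{F}$-geometric mean conclusion you point out.
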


\begin{de}A \emph{quadrilateral snake} $Q_n$ is obtained from replacing each edge in $P_n$ with $C_4$.
\end{de}

\begin{thm}[\cite{som3}]
Any quadrilateral snake $Q_n$ is a geometric mean graph.
\end{thm}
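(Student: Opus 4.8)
The plan is to exhibit an explicit geometric mean labeling of $Q_n$, built one quadrilateral at a time. Write the spine of $Q_n$ as $u_1,u_2,\dots,u_n$ (the vertices of the underlying path $P_n$), and for $i=1,\dots,n-1$ let $a_i,b_i$ be the two new vertices of the $i$-th quadrilateral, so that this quadrilateral is the $4$-cycle on $u_i,a_i,b_i,u_{i+1}$ in this cyclic order, with edge set $\{u_iu_{i+1},\,u_ia_i,\,a_ib_i,\,b_iu_{i+1}\}$. Counting gives $p=3n-2$ and $q=4(n-1)=4n-4$, so we must produce an injection $\psi\colon V(Q_n)\to\{1,\dots,4n-3\}$ whose induced edge-label set is $\{1,\dots,4n-4\}$. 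The guiding idea is to force the four edges of the $i$-th quadrilateral to receive the consecutive labels $4i-3,\,4i-2,\,4i-1,\,4i$; then the union over $i=1,\dots,n-1$ is exactly $\{1,\dots,q\}$, and since the same spine vertex $u_i$ closes block $i-1$ and opens block $i$, the blocks fit together automatically.

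Concretely I would take
\[
\psi(u_i)=4i-3\quad(1\le i\le n),\qquad \psi(a_i)=4i-2,\quad \psi(b_i)=4i-1\quad(1\le i\le n-1),
\]
which lie in the residue classes $1,2,3\pmod 4$ and are therefore pairwise distinct, with largest value $\psi(u_n)=4n-3=q+1$ and the values $4,8,\dots,4n-4$ simply omitted; thus $\psi$ is a valid injection into $\{1,\dots,q+1\}$. On the edges of the $i$-th quadrilateral I would then use
\[
u_ia_i\mapsto\bigl\lfloor\sqrt{(4i-3)(4i-2)}\bigr\rfloor,\qquad a_ib_i\mapsto\bigl\lfloor\sqrt{(4i-2)(4i-1)}\bigr\rfloor,
\]
\[
u_iu_{i+1}\mapsto\bigl\lceil\sqrt{(4i-3)(4i+1)}\bigr\rceil,\qquad b_iu_{i+1}\mapsto\bigl\lceil\sqrt{(4i-1)(4i+1)}\bigr\rceil.
\]

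The remaining work is to check that for every $i\ge 1$ these four values equal $4i-3,\,4i-2,\,4i-1,\,4i$ respectively. Three are immediate: $4i-3<\sqrt{(4i-3)(4i-2)}<4i-2$ and $4i-2<\sqrt{(4i-2)(4i-1)}<4i-1$ both follow from the trivial bounds $m<\sqrt{m(m+1)}<m+1$, while $(4i-1)^2<(4i-1)(4i+1)=16i^2-1<(4i)^2$ puts $\sqrt{(4i-1)(4i+1)}$ in $(4i-1,4i)$, with ceiling $4i$. The only estimate with content concerns $u_iu_{i+1}$: here $(4i-3)(4i+1)=16i^2-8i-3$ lies strictly below $(4i-1)^2=16i^2-8i+1$, and strictly above $(4i-2)^2$ because $16i^2-8i-3-(4i-2)^2=8i-7>0$ for all $i\ge1$; hence $\sqrt{(4i-3)(4i+1)}\in(4i-2,4i-1)$ with ceiling $4i-1$. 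So the $i$-th quadrilateral contributes exactly $\{4i-3,4i-2,4i-1,4i\}$ to the edge labels, the union over $i$ is $\{1,\dots,q\}$, and $\psi$ is a geometric mean labeling.

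I expect the real difficulty to be organizational rather than analytic: one must confirm that the lone nontrivial product $(4i-3)(4i+1)$ is never a perfect square (it is squeezed strictly between consecutive squares, so this is automatic), that the retained spine edge $u_iu_{i+1}$ gets a label disjoint from those of its own and the neighbouring blocks (it gets $4i-1$, whereas the neighbours use labels $\ge 4i+1$ or $\le 4i-4$), and that the small-index case $i=1$ (products $2,6,5,15$) behaves as claimed; the degenerate value $n=2$, where $Q_2=C_4$, is covered both by this construction and by the earlier theorem that every cycle is a geometric mean graph. If one instead defines $Q_n$ with $u_i$ and $u_{i+1}$ at opposite corners of each $4$-cycle (so the original path edges are deleted), the same choices $\psi(u_i)=4i-3,\ \psi(a_i)=4i-2,\ \psi(b_i)=4i$ work with every edge label coming from the flooring function, and the verification is entirely analogous.
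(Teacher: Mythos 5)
Your proposal is correct and coincides with the construction the paper itself uses (in its Theorem on $k$-geometric mean labelings of $Q_n$, specialized to $k=1$): the vertex labels $4i-3,\,4i-2,\,4i-1$ on $u_i,v_i,w_i$ with flooring on $u_iv_i$ and $v_iw_i$ and ceiling on $u_iu_{i+1}$ and $w_iu_{i+1}$ are exactly the paper's choices, and your bound $(4i-2)^2<(4i-3)(4i+1)<(4i-1)^2$ supplies the one verification the paper leaves implicit. No gaps.
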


Somasundram et al. provide some results on the join of graphs in \cite{som1}, \cite{som2} and \cite{som3} as follows.

\begin{thm}[\cite{som2}]
$C_m + P_n$ is a geometric mean graph.
\end{thm}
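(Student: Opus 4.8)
The plan is to exploit the fact that, under the paper's definitions, $C_m+P_n$ is simply the disjoint union $C_m\cup P_n$, which has $p=m+n$ vertices and $q=m+n-1$ edges; thus a geometric mean labeling must be a \emph{bijection} $\psi\colon V\to\{1,2,\dots,m+n\}$ whose induced edge labels exhaust $\{1,2,\dots,m+n-1\}$. I would produce such a labeling by splitting $\{1,\dots,m+n\}$ into the block $\{1,\dots,m\}$, assigned to the cycle, and the block $\{m+1,\dots,m+n\}$, assigned to the path, arranged so that the cycle realizes the edge labels $\{1,\dots,m\}$ and the path realizes the edge labels $\{m+1,\dots,m+n-1\}$; these two sets tile $\{1,\dots,q\}$ with no overlap, which gives the result.

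For the cycle I would place $1,2,\dots,m$ consecutively around $C_m$. For any two consecutive integers $i^2<i(i+1)<(i+1)^2$, so $\sqrt{i(i+1)}\in(i,i+1)$ and the edge $\{i,i+1\}$ may be labeled either $i$ or $i+1$; the wrap-around edge $\{m,1\}$ carries geometric mean $\sqrt m$. Set $r:=\lceil\sqrt m\rceil$, which satisfies $2\le r\le m-1$ for $m\ge 3$, give the wrap-around edge the label $r$, and on the chain edges choose the floor (label $i$) for $1\le i\le r-1$ and the ceiling (label $i+1$) for $r\le i\le m-1$. Then the chain edges realize $\{1,\dots,r-1\}\cup\{r+1,\dots,m\}=\{1,\dots,m\}\setminus\{r\}$, and together with the wrap-around edge one obtains exactly $\{1,\dots,m\}$. (This is just the explicit construction behind ``any cycle is a geometric mean graph,'' specialized so that the vertex labels used are precisely $\{1,\dots,m\}$.)

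For the path I would place $m+1,m+2,\dots,m+n$ along $P_n$ in this order; the $j$-th edge joins $m+j$ and $m+j+1$, has geometric mean in $(m+j,m+j+1)$, and I assign it the floor $m+j$. The path edge labels are then $\{m+1,\dots,m+n-1\}$. Finally $\psi$ uses each of $1,\dots,m+n$ exactly once and the edge label set is $\{1,\dots,m\}\cup\{m+1,\dots,m+n-1\}=\{1,\dots,m+n-1\}=\{1,\dots,q\}$, so $\psi$ is a geometric mean labeling. The degenerate cases are immediate: for $n=1$ the path is an isolated vertex carrying label $m+1$ and only the cycle part is needed.

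The step I expect to be the crux is closing the cycle, and this is exactly why the cycle is given the block of \emph{smallest} labels: its wrap-around edge then has geometric mean $\sqrt{1\cdot m}=\sqrt m$, whose ceiling is the small index $r$ that the monotone chain of edges necessarily skips, so the cycle closes without waste. Had the cycle been assigned a block of large consecutive labels, its wrap-around edge would have geometric mean far above the smallest label of that block and could not patch the resulting gap; so the ordering of the two blocks — cycle first, path second — is the one thing that really has to be gotten right.
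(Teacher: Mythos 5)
Your construction is correct: with the paper's definition of the join as a disjoint union, giving the cycle the block $\{1,\dots,m\}$ (closing it at $r=\lceil\sqrt{m}\rceil$, which indeed satisfies $2\le r\le m-1$ for $m\ge 3$) and the path the block $\{m+1,\dots,m+n\}$ with floor labels yields exactly the edge labels $\{1,\dots,m+n-1\}$. The paper itself only cites \cite{som2} for this theorem and gives no proof, but your argument is precisely the specialization of the paper's own later machinery — a $1$-geometric mean labeling of $C_m$ that avoids the top label $k+q$ (Theorem \ref{cycle} and the remark after Theorem \ref{crown}) followed by an $(m+1)$-geometric mean labeling of $P_n$ placed last (Theorem \ref{path}) — so it matches the paper's approach; only your closing heuristic about why the cycle must come first is slightly off (a cycle is $k$-geometric mean for every $k$, so the real obstruction to ``path first'' is that the path consumes all $n$ labels of its block while producing only $n-1$ edge labels, leaving a gap no all-larger-labeled component can fill), but that remark is not load-bearing.
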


\begin{thm}[\cite{som2}]
$C_m + C_n$ is a geometric mean graph.
\end{thm}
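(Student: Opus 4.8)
The plan is to produce a geometric mean labeling of $C_m+C_n$ --- here the disjoint union of the two cycles, so $p=q=m+n$ --- by concatenating two independently chosen cycle labelings on complementary blocks of labels. Since $C_m+C_n$ has no edge joining $V(C_m)$ to $V(C_n)$, it is enough to label $C_m$ so that its vertex labels lie in $\{1,\dots,m+1\}$ and its edge labels are exactly $\{1,\dots,m\}$, and to label $C_n$ so that its vertex labels lie in $\{m+1,\dots,m+n+1\}$ and its edge labels are exactly $\{m+1,\dots,m+n\}$; the two edge-label blocks then assemble into $\{1,\dots,m+n\}$, and the one remaining point is that the combined vertex labeling be an injection into $\{1,\dots,m+n+1\}$.

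Two refinements of the known fact that every cycle is a geometric mean graph are needed. First, $C_m$ should be labeled \emph{economically}: using exactly the vertex labels $\{1,2,\dots,m\}$, so that the value $m+1$ is not used. This is possible because the top edge label $m$ does not need the edge $\{m,m+1\}$ --- one has $\lceil\sqrt{m(m-1)}\rceil=m$ --- so the usual construction can be reworked to realize $m$ on the edge $\{m-1,m\}$ by rounding up. Second, $C_n$ should carry an \emph{$(m+1)$-shifted} geometric mean labeling: an injection $V(C_n)\to\{m+1,m+2,\dots,m+n+1\}$ whose induced edge labels are exactly $\{m+1,\dots,m+n\}$. Such a labeling is automatically forced to use the value $m+1$, because realizing the edge label $m+1$ from two distinct labels in $\{m+1,\dots,m+n+1\}$ forces one endpoint to equal $m+1$ (the other being $m+2$ or $m+3$). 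To build it I would transport the cyclic arrangement used in the original proof for $C_n$: once the labels are this large, $\lfloor\sqrt{ab}\rfloor$ behaves like $\lfloor(a+b)/2\rfloor$, so the same cyclic order of $\{m+1,\dots,m+n+1\}$ works, with the floor-versus-ceiling choice altered only at the handful of edges where $ab$ lies within $1$ of a perfect square.

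Granting these, put $\psi$ equal to the economical labeling on $V(C_m)$ and the $(m+1)$-shifted labeling on $V(C_n)$. Their vertex-label sets are $\{1,\dots,m\}$ and a subset of $\{m+1,\dots,m+n+1\}$, hence disjoint, so $\psi$ is an injection with image of size $m+n$ inside $\{1,\dots,m+n+1\}$ (exactly one value of $\{m+2,\dots,m+n+1\}$ being omitted). The induced edge labels are $\{1,\dots,m\}\cup\{m+1,\dots,m+n\}=\{1,\dots,m+n\}$, so $\psi$ is a geometric mean labeling of $C_m+C_n$.

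The main obstacle is the two explicit constructions: exhibiting the economical labeling of $C_m$ and the $(m+1)$-shifted labeling of $C_n$ for every residue of $m$ and of $n$ modulo $4$ (treating the smallest cases $n=3,4,5$, and likewise $m=3,4,5$, by hand). In each residue class this amounts to writing down one explicit cyclic order and checking the constantly many ``near-square'' edges on which the rounding is forced. By contrast the two blocks essentially do not interact: an edge of $C_m$ with both endpoints in $\{1,\dots,m\}$ can only receive a label in $\{1,\dots,m\}$, and an edge of $C_n$ with both endpoints in $\{m+1,\dots,m+n+1\}$ can only receive a label in $\{m+1,\dots,m+n\}$, the sole caveat being that one must round down on an edge joining $m+n$ to $m+n+1$ should it occur.
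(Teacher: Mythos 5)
Your proposal is correct and follows essentially the same route as the paper: your ``economical'' labeling of $C_m$ and ``$(m+1)$-shifted'' labeling of $C_n$ are exactly the paper's $1$- and $(m+1)$-geometric mean labelings of cycles (Theorem \ref{cycle}), combined with the same observation that a cycle needs only $q$ vertex labels, so the two label blocks stay disjoint and the edge labels concatenate to $\{1,\dots,m+n\}$. The only real difference is that the explicit constructions you defer are simpler than the mod-$4$ case analysis you anticipate: label the vertices of each cycle consecutively and switch from floor to ceiling at the single threshold value $h$ assigned to the wrap-around edge, no residue distinctions being needed.
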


\begin{thm}[\cite{som2}]
For any $n\in \mathbb{N}$, $n$ disjoint copy of $C_3$ denoted by $nC_3$ is a geometric mean graph.
\end{thm}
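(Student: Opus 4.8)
The plan is to give an explicit labeling that handles each triangle independently, using the fact that $nC_3$ has $p=q=3n$, so a geometric mean labeling is an injection $\psi\colon V(nC_3)\to\{1,\dots,3n+1\}$ whose induced edge labels exhaust $\{1,\dots,3n\}$; we will simply never use the largest available value $3n+1$. Write the $i$-th copy of $C_3$ as a triangle on vertices $u_i,v_i,w_i$ and set $\psi(u_i)=3i-2$, $\psi(v_i)=3i-1$, $\psi(w_i)=3i$ for $i=1,\dots,n$. This is visibly an injection onto $\{1,2,\dots,3n\}$, so it remains to choose, for each edge, the floor or the ceiling of the geometric mean so that the three edges of the $i$-th triangle are labeled $3i-2,\,3i-1,\,3i$.

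Setting $m=3i-2$, so the three vertex labels of the $i$-th copy are $m,m+1,m+2$, I would assign the label $\lfloor\sqrt{m(m+1)}\rfloor$ to $u_iv_i$, the label $\lceil\sqrt{m(m+2)}\rceil$ to $u_iw_i$, and the label $\lceil\sqrt{(m+1)(m+2)}\rceil$ to $v_iw_i$. The verification is then a one-line inequality in each case: from $m^2<m(m+1)<(m+1)^2$ we get $\lfloor\sqrt{m(m+1)}\rfloor=m=3i-2$; from $m^2<m(m+2)=m^2+2m<m^2+2m+1=(m+1)^2$ we get $\lceil\sqrt{m(m+2)}\rceil=m+1=3i-1$; and from $(m+1)^2<(m+1)(m+2)<(m+2)^2$ we get $\lceil\sqrt{(m+1)(m+2)}\rceil=m+2=3i$. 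All of these hold for every integer $m\ge 1$, hence for every $i\ge 1$.

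Finally I would observe that the edge-label set contributed by the $i$-th triangle is exactly $\{3i-2,3i-1,3i\}$, that these sets are pairwise disjoint, and that $\bigcup_{i=1}^{n}\{3i-2,3i-1,3i\}=\{1,2,\dots,3n\}=\{1,\dots,q\}$; together with injectivity of $\psi$ this shows $\psi$ is a geometric mean labeling of $nC_3$.

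Honestly there is no serious obstacle here: because the components of $nC_3$ are vertex-disjoint there is no interaction between the labels used on different copies, so the whole argument reduces to the three elementary floor/ceiling inequalities above. The only point worth flagging — and the reason this disjoint-union case is so much easier than, say, the joins $C_m+C_n$ treated earlier — is that the construction has slack: it never needs the value $3n+1$, so one has a free slot to play with if a variant of the statement were required.
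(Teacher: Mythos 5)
Your proposal is correct: the labeling $\psi(u_i)=3i-2$, $\psi(v_i)=3i-1$, $\psi(w_i)=3i$ is injective into $\{1,\dots,3n+1\}$, and the three elementary inequalities you state do give edge labels $\{3i-2,3i-1,3i\}$ on the $i$-th triangle, so the edge label set is exactly $\{1,\dots,3n\}$. Note that the paper itself states this theorem only as a citation to \cite{som2} and gives no proof; however, your construction is exactly what the paper's own machinery produces, namely the $k$-geometric mean labeling of $C_3$ from Theorem \ref{cycle} applied to the $i$-th component with $k=3i-2$ (there $h=\lceil\sqrt{k(k+2)}\rceil=k+1$, giving the same floor/ceiling choices you made), assembled componentwise as in the finite-join theorem. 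So your argument is a self-contained, more elementary presentation of the same approach, and your closing observation about the unused value $3n+1$ is precisely the slack the paper exploits when chaining components together.
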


\begin{thm}[\cite{som2}]
For any $m,n\in \mathbb{N}$, $nC_3 + P_m$ is a geometric mean graph.
\end{thm}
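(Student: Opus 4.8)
The plan is to exhibit an explicit vertex labeling rather than to splice together the known labelings of $nC_3$ and of $P_m$: those live on the label sets $\{1,\dots,3n+1\}$ and $\{1,\dots,m\}$ and their ranges would have to be reconciled, whereas a direct construction is shorter. Using the definition above, $G=nC_3+P_m$ is the disjoint union of $n$ triangles and a path on $m$ vertices, so it has $p=3n+m$ vertices and $q=3n+(m-1)=3n+m-1$ edges; since $p=q+1$, a geometric mean labeling is forced to be a bijection $\psi\colon V(G)\to\{1,2,\dots,3n+m\}$ whose induced edge-label set is exactly $\{1,2,\dots,3n+m-1\}$. I would partition $\{1,\dots,3n+m\}$ into the $n$ consecutive blocks $B_i=\{3i-2,\,3i-1,\,3i\}$ for $i=1,\dots,n$, one block per triangle, together with the tail $T=\{3n+1,\dots,3n+m\}$ for the path, and then argue that each block produces precisely the matching block of edge labels.

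For the $i$-th copy of $C_3$ I would put the labels $3i-2,\,3i-1,\,3i$ on its three vertices. The three edge products are then $(3i-2)(3i-1)$, $(3i-1)(3i)$, and $(3i-2)(3i)$. One checks that $(3i-2)^2<(3i-2)(3i-1)<(3i-1)^2$, so the first edge may take the floor label $3i-2$; that $(3i-1)^2<(3i-1)(3i)<(3i)^2$, so the second edge may take the ceiling label $3i$; and that $(3i-2)^2<(3i-2)(3i)=9i^2-6i<9i^2-6i+1=(3i-1)^2$, so the third edge may take the ceiling label $3i-1$. Hence triangle $i$ contributes exactly the edge labels $\{3i-2,3i-1,3i\}$, and as $i$ ranges over $1,\dots,n$ these tile $\{1,\dots,3n\}$ without repetition; the vertex labels used are exactly $\bigcup_i B_i=\{1,\dots,3n\}$.

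For the path $P_m=v_1v_2\cdots v_m$ I would set $\psi(v_j)=3n+j$. For each $j$ with $1\le j\le m-1$ we have $(3n+j)^2<(3n+j)(3n+j+1)<(3n+j+1)^2$, so the edge $v_jv_{j+1}$ may take the floor label $3n+j$, and these labels form $\{3n+1,\dots,3n+m-1\}$. Putting the two pieces together, $\psi$ is a bijection onto $\{1,\dots,3n+m\}$ and the edge-label set is $\{1,\dots,3n+m-1\}$, which is what is required; if $m=1$ the path is a single vertex carrying the label $3n+1$, there is no path edge, and the triangles already realize all of $\{1,\dots,3n\}$, so the degenerate case is fine too.

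I do not anticipate a genuine obstacle. The only points needing care are the two boundary inequalities for the ``diagonal'' edge of each triangle --- essentially the computation $9i^2-6i<(3i-1)^2$ above, valid for every $i\ge 1$, the case $i=1$ included --- and the bookkeeping that the $n$ triangle blocks, the path tail, and the edge-label ranges they induce tile $\{1,\dots,3n+m\}$ and $\{1,\dots,3n+m-1\}$ with no overlaps. Both are routine, so once these checks are spelled out the theorem follows.
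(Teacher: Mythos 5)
Your construction is correct: each block $\{3i-2,3i-1,3i\}$ does yield exactly the edge labels $3i-2$ (floor on the edge $\{3i-2,3i-1\}$), $3i-1$ (ceiling on $\{3i-2,3i\}$, since $(3i-2)^2<9i^2-6i<(3i-1)^2$), and $3i$ (ceiling on $\{3i-1,3i\}$), the path tail works with floors, and the counts $p=q+1$ match, so $\psi$ is a valid geometric mean labeling, including the degenerate case $m=1$. Note, however, that the paper does not prove this statement at all --- it is quoted from \cite{som2} as background --- so there is no in-paper proof to compare against line by line; the closest thing is the paper's own general join theorem in Section 2.3, which subsumes this result. Measured against that, your argument is essentially the same idea in disguise: despite your stated intention to avoid ``splicing'' component labelings, your labeling of the $i$-th triangle is precisely a $(3i-2)$-geometric mean labeling of $C_3$ and your path labeling is precisely the $(3n+1)$-geometric mean labeling of $P_m$ from Theorem \ref{path}, i.e.\ the consecutive-block shifting that the paper formalizes through the notion of $k$-geometric mean labelings. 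What your version buys is self-containedness and explicit verification of the three inequalities per triangle; what the paper's machinery buys is reusability, since the same shifting argument immediately handles arbitrary finite unions of cycles and crowns with one path, comb, or snake appended, rather than only $nC_3+P_m$.
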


\begin{thm}[\cite{som2}]
For any $m,n\in \mathbb{N}$, $nC_3 + C_m$ is a geometric mean graph.
\end{thm}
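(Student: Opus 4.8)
The plan is to use that, since ``$+$'' denotes disjoint union here, the graph $G=nC_3+C_m$ has $p=q=3n+m$, so a geometric mean labeling is an injection $\psi\colon V(G)\to\{1,\dots,3n+m+1\}$ whose induced edge labels fill up $\{1,\dots,3n+m\}$. I will build $\psi$ by giving the copy of $C_m$ the block of labels $\{1,\dots,m\}$ and giving the $j$-th triangle the block $\{m+3j-2,\,m+3j-1,\,m+3j\}$ for $j=1,\dots,n$; the point is to arrange that each block of \emph{vertex} labels also realizes, as its set of edge labels, exactly the same block of integers, so that the blocks of edge labels tile $\{1,\dots,3n+m\}$ without overlap.

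First I would handle the cycle. Refining the cited fact that every cycle is a geometric mean graph, a direct construction shows that $C_m$ (with $m\ge 3$) admits a geometric mean labeling whose vertex labels are exactly $\{1,2,\dots,m\}$: put $1,2,\dots,m$ around the cycle, set $\ell=\lfloor\sqrt m\rfloor$, label the closing edge $\{m,1\}$ by $\lfloor\sqrt m\rfloor=\ell$, label each edge $\{i,i+1\}$ with $1\le i<\ell$ by $\lfloor\sqrt{i(i+1)}\rfloor=i$, and label each edge $\{i,i+1\}$ with $\ell\le i\le m-1$ by $\lceil\sqrt{i(i+1)}\rceil=i+1$. One checks $1\le\ell\le m-1$ for $m\ge3$, that every floor/ceiling here equals the stated integer, and that the resulting edge label set is $\{1,\dots,\ell-1\}\cup\{\ell\}\cup\{\ell+1,\dots,m\}=\{1,\dots,m\}$. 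So this piece uses vertex labels $\{1,\dots,m\}$ and produces edge labels $\{1,\dots,m\}$.

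Next I would label the $j$-th triangle on the three consecutive integers $a,a+1,a+2$ with $a=m+3j-2$, assigning the edge $\{a,a+1\}$ the label $\lfloor\sqrt{a(a+1)}\rfloor=a$, the edge $\{a,a+2\}$ the label $\lceil\sqrt{a(a+2)}\rceil=a+1$ (valid since $a(a+2)=(a+1)^2-1$), and the edge $\{a+1,a+2\}$ the label $\lceil\sqrt{(a+1)(a+2)}\rceil=a+2$. Thus triangle $j$ contributes vertex labels $\{m+3j-2,m+3j-1,m+3j\}$ together with the identical set of edge labels; letting $j$ run from $1$ to $n$, these blocks tile $\{m+1,\dots,m+3n\}$ in both the vertex labels and the edge labels. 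Combining with the cycle, $\psi$ uses the vertex labels $\{1,\dots,m\}\cup\{m+1,\dots,m+3n\}=\{1,\dots,3n+m\}$, an injection into $\{1,\dots,3n+m+1\}$, and induces the edge label set $\{1,\dots,m\}\cup\{m+1,\dots,m+3n\}=\{1,\dots,3n+m\}$, which is exactly $\{1,\dots,q\}$.

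The routine part is verifying each individual floor or ceiling, and all of these collapse to the elementary inequalities $t^2\le t(t+1)<(t+1)^2$ and $t^2<(t+1)^2-1<(t+1)^2$. The one step that needs genuine attention — the main obstacle — is the cycle: one must ensure $C_m$ can be geometric-mean-labeled using \emph{only} the labels $\{1,\dots,m\}$, i.e.\ avoiding the top label $m+1$ permitted by the definition, since otherwise the $C_m$-block and the first triangle's block of vertex labels would collide; the closing-edge device described above is precisely what secures this, and it is the only place where the cyclic (rather than path-like) structure has to be confronted.
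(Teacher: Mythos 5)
Your construction is correct: with the paper's definition of ``join'' as disjoint union, $nC_3+C_m$ has $p=q=3n+m$, your cycle labeling on $\{1,\dots,m\}$ (closing edge labelled $\lfloor\sqrt{m}\rfloor$, floors below that value, ceilings above) does produce edge labels $\{1,\dots,m\}$, and each triangle on $\{a,a+1,a+2\}$ realizes exactly $\{a,a+1,a+2\}$ as edge labels via $a(a+2)=(a+1)^2-1$, so the blocks tile $\{1,\dots,q\}$. Note, however, that the paper states this theorem only as a cited result of Somasundaram et al.\ and gives no proof of it; the closest thing to a proof in the paper is its general theorem on finite joins, which covers $nC_3+C_m$ as a special case. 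Your argument is in essence that same strategy made explicit: labelling the $j$-th triangle with the consecutive block $\{m+3j-2,m+3j-1,m+3j\}$ is precisely an $(m+3j-2)$-geometric mean labeling of $C_3$ in the paper's terminology, and the key point you isolate --- that $C_m$ must be labelled using only $\{1,\dots,m\}$, avoiding the permitted top label $m+1$, so that the blocks do not collide --- is exactly the observation the paper records after its crown theorem (cycles and crowns have $p=q$ and can be labelled without the label $k+q$). What your version buys is self-containedness and a cleanly verified closing-edge label $\lfloor\sqrt{m\cdot 1}\rfloor$ computed from the actual endpoint labels, whereas the paper's approach buys generality (arbitrary mixtures of cycles, crowns, and one path/comb/snake, and the $k$-shifted corollary) at the cost of delegating the component labelings to its Theorems on cycles and crowns.
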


\begin{thm}[\cite{som3}]
$C_m + T_n$ is a geometric mean graph.
\end{thm}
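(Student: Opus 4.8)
The plan is to build an explicit geometric mean labeling of the disjoint union $C_m + T_n$ by concatenation: label the cycle with the \emph{bottom} block of integers and the triangular snake with a shifted copy of its standard labeling occupying the \emph{top} block. Set $q=|E(C_m+T_n)|=m+3(n-1)$. I would reduce everything to two claims about the pieces: (i) $C_m$ admits a geometric mean labeling whose vertex labels are precisely $\{1,2,\dots,m\}$ --- so it deliberately avoids the largest admissible label $m+1$ --- with edge label set $\{1,\dots,m\}$; and (ii) $T_n$ admits a labeling $\psi$ with $\psi(V(T_n))\subseteq\{m+1,m+2,\dots,m+3n-2\}=\{m+1,\dots,q+1\}$ and edge label set $\{m+1,m+2,\dots,q\}$. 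Given (i) and (ii), the two vertex-label sets are disjoint subsets of $\{1,\dots,q+1\}$, so together they define an injection into $\{1,\dots,q+1\}$, and the edge labels combine to $\{1,\dots,m\}\cup\{m+1,\dots,q\}=\{1,\dots,q\}$; so the theorem follows from (i) and (ii).

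For (i), place $1,2,\dots,m$ consecutively around the cycle $w_1w_2\cdots w_mw_1$ and put $r=\lfloor\sqrt m\rfloor$. Label the closing edge $w_mw_1$ with $\lfloor\sqrt{m\cdot 1}\rfloor=r$; label $w_iw_{i+1}$ with $\lfloor\sqrt{i(i+1)}\rfloor=i$ for $1\le i\le r-1$; and label $w_iw_{i+1}$ with $\lceil\sqrt{i(i+1)}\rceil=i+1$ for $r\le i\le m-1$. Because $i^2<i(i+1)<(i+1)^2$, these floor and ceiling values are exactly as stated, no path edge ever receives the value $r$, and the edge labels form $\{1,\dots,r-1\}\cup\{r\}\cup\{r+1,\dots,m\}=\{1,\dots,m\}$ without repetition; this is valid for every $m\ge 3$.

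For (ii), use the triangle-by-triangle labeling underlying the cited fact that $T_n$ is a geometric mean graph, shifted up by $m$. With path vertices $u_1,\dots,u_n$ and $v_i$ the apex of the $i$-th triangle (so $v_i$ is adjacent to $u_i$ and $u_{i+1}$), set $\psi(u_i)=m+3i-2$ and $\psi(v_i)=m+3i-1$. Writing $a=m+3i-2$ and $c=m+3i$, the three products in the $i$-th triangle are $\psi(u_i)\psi(v_i)=a(a+1)$, $\psi(v_i)\psi(u_{i+1})=(c-1)(c+1)=c^2-1$, and $\psi(u_i)\psi(u_{i+1})=(c-2)(c+1)=c^2-c-2$. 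Since $\lfloor\sqrt{a(a+1)}\rfloor=a$, $\lfloor\sqrt{c^2-1}\rfloor=c-1$, and $(c-1)^2<c^2-c-2<c^2$ for $c\ge 4$ (here $c=m+3i\ge 6$) so that $\lceil\sqrt{c^2-c-2}\rceil=c$, the $i$-th triangle can be given the labels $m+3i-2,\ m+3i-1,\ m+3i$. Taking the union over $i=1,\dots,n-1$ yields exactly $\{m+1,\dots,m+3(n-1)\}=\{m+1,\dots,q\}$, and $\psi(V(T_n))=\{m+3i-2:1\le i\le n\}\cup\{m+3i-1:1\le i\le n-1\}\subseteq\{m+1,\dots,m+3n-2\}$, as required; injectivity of $\psi$ is clear since these values sit in two distinct residue classes modulo $3$.

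Most of this is routine bookkeeping --- counting edges, checking the floor/ceiling inequalities, and verifying injectivity. The one step that carries the real idea is (i): a cycle on $m$ vertices has $m+1$ admissible labels, so exactly one must be omitted, and it is not obvious a priori that one can omit the \emph{top} label; the device of letting the closing edge $w_mw_1$ absorb the mid-range value $\lfloor\sqrt m\rfloor$, and then running floors on the first $r-1$ path edges and ceilings on the remaining ones, is what makes this uniform in $m$. An equivalent repackaging, phrased in the $k$-geometric mean language introduced later in the paper, is to show $T_n$ is a $k$-geometric mean graph for every $k\ge 2$ and then splice it with the bottom-block labeling of $C_m$; the substance is identical.
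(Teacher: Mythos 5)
Your proof is correct, and it is essentially the approach this paper itself takes: the theorem is only cited here (from the Somasundaram et al.\ paper), but the paper's general join theorem subsumes it by exactly your splicing strategy --- a labeling of $C_m$ on the bottom block $\{1,\dots,m\}$ that skips the top admissible label (with the closing edge absorbing a mid-range value, floors before it and ceilings after), combined with the shifted snake labeling, which is precisely the paper's $(m+1)$-geometric mean labeling of $T_n$ from Theorem~\ref{trisnake}. Your verification of the floor/ceiling inequalities and of the disjointness of the two label blocks is sound, so nothing is missing.
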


\begin{thm}[\cite{som3}]
$(C_m\odot K_1) + T_n$ is a geometric mean graph.
\end{thm}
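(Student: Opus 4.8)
The plan is to build a geometric mean labeling of the disjoint union $G=(C_m\odot K_1)+T_n$ by placing the two components in consecutive blocks of labels. Write $q_1=|E(C_m\odot K_1)|=2m$ and $q_2=|E(T_n)|=3(n-1)$, so $G$ has $q=q_1+q_2$ edges and $2m+(2n-1)$ vertices, and $q+1=2m+3n-2$. The bridge is the following gluing principle, which is precisely why $k$-geometric mean labelings are worth isolating: if $G_1$ has a geometric mean labeling using only vertex labels from $\{1,\dots,q_1\}$ and $G_2$ has a $(q_1+1)$-geometric mean labeling, then $G_1\cup G_2$ is a geometric mean graph. Indeed the first labeling contributes vertex labels in $\{1,\dots,q_1\}$ and edge labels $\{1,\dots,q_1\}$, the second contributes vertex labels in $\{q_1+1,\dots,q_1+q_2+1\}$ and edge labels $\{q_1+1,\dots,q_1+q_2\}$; the two vertex blocks are disjoint, their union lies in $\{1,\dots,q+1\}$, and the edge label sets merge to $\{1,\dots,q\}$.

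So it suffices to supply two ingredients. The first is a routine explicit $k$-geometric mean labeling of $T_n$, valid for all $k\ge 1$ (one then takes $k=2m+1$). Write $w_1w_2\cdots w_n$ for the path of $T_n$ and $t_i$ for the apex of the triangle on $w_i,w_{i+1}$, and set $w_i=k+3(i-1)$ for $1\le i\le n$ and $t_i=k+3i-2$ for $1\le i\le n-1$. These $2n-1$ labels are distinct and lie in $\{k,\dots,k+q_2\}$; a short computation gives, for the $i$-th triangle, $\lfloor\sqrt{w_it_i}\rfloor=k+3i-3$, $\lfloor\sqrt{w_iw_{i+1}}\rfloor=k+3i-2$, and $\lceil\sqrt{t_iw_{i+1}}\rceil=k+3i-1$, and as $i$ ranges over $1,\dots,n-1$ these triples partition $\{k,\dots,k+q_2-1\}$. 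Hence $T_n$ is $k$-geometric mean.

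The second, and genuinely delicate, ingredient is a refinement of the crown theorem: $C_m\odot K_1$ has a geometric mean labeling whose vertex-label set is exactly $\{1,2,\dots,2m\}$, so that the value $2m+1$ is left unused. This normalization is in fact forced, since in the $(2m+1)$-geometric mean labeling of $T_n$ every vertex label is $\ge 2m+1$ and the edge carrying the label $2m+1$ must therefore have an endpoint labeled $2m+1$; so $T_n$ uses $2m+1$ and injectivity across the union demands that the crown avoid it. I would establish the refinement by writing an explicit formula for the cycle labels $\psi(v_1),\dots,\psi(v_m)$ and the pendant labels $\psi(u_1),\dots,\psi(u_m)$ — following the construction behind the cited crown theorem, shifted so the top value $2m+1$ is skipped — and checking that each cycle edge and each pendant edge receives, by a suitable floor or ceiling, a distinct label from $\{1,\dots,2m\}$. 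I expect this to be the main obstacle: the floor/ceiling bookkeeping is tightest at the wrap-around edge $v_mv_1$ and at the pendant edges attached to the smallest labels, and it will likely be necessary to handle a few small values of $m$ (and possibly the parities of $m$) separately. With the refinement in hand, applying the gluing principle to $G_1=C_m\odot K_1$ (so $q_1=2m$) and $G_2=T_n$ completes the proof.
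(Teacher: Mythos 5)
Your overall strategy is exactly the one the paper uses: split the label range $\{1,\dots,q+1\}$ into consecutive blocks, give the triangular snake a $(2m+1)$-geometric mean labeling, and give the crown a geometric mean labeling that avoids the top value $2m+1$. Your block-gluing principle is precisely the mechanism behind the paper's theorem on finite joins (and its corollary), and your labeling of $T_n$ (path vertices $k+3(i-1)$, apexes $k+3i-2$, with the stated floors and ceilings) coincides with Theorem \ref{trisnake} and is verified correctly; indeed the paper obtains the statement in question as a special case of its general finite-join result rather than reproving it in isolation.

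The genuine gap is the second ingredient, which you yourself flag as the main obstacle and then leave unestablished: you never exhibit a geometric mean labeling of $C_m\odot K_1$ whose vertex labels are exactly $\{1,2,\dots,2m\}$; you only say you would shift the construction behind the cited crown theorem and anticipate delicate case analysis. Since, as you correctly argue, avoidance of $2m+1$ by the crown is forced in this scheme, the proof is incomplete without that construction. The paper supplies precisely this missing piece in Theorem \ref{crown}: label the pendant vertices $v_i$ by $k+2i-2$ and the cycle vertices $u_i$ by $k+2i-1$, so the top label $k+q=k+2m$ is never used (this is the observation recorded immediately after that theorem); assign the wrap-around edge $u_1u_m$ the label $h=\left\lceil\sqrt{(k+1)(k+2m-1)}\right\rceil$, and switch from floors to ceilings at the index determined by $h$, with exactly two cases according to the parity of $h-k$ (no case split on $m$ or on small values of $m$ is needed). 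Taking $k=1$ gives the refinement your argument requires; without it, or an equivalent explicit construction and verification, your write-up establishes only the $T_n$ half of the theorem.
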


\begin{thm}[\cite{som3}]
$C_m + Q_n$ is a geometric mean graph.
\end{thm}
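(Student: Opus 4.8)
The plan is to split the vertex‑label set $\{1,\dots,q+1\}$ of $C_m+Q_n$ into a ``bottom block'' that will carry $Q_n$ and a ``top block'' that will carry $C_m$, and then glue two independently constructed labelings. Recalling that here $+$ is disjoint union, that $Q_n$ has $3n-2$ vertices and $4(n-1)$ edges, and hence that $q=m+4(n-1)$, I would prove two lemmas: (a) $Q_n$ admits a geometric mean labeling using only the vertex labels $\{1,\dots,4(n-1)\}$ (so that its induced edge‑label set is forced to be $\{1,\dots,4(n-1)\}$); and (b) $C_m$ admits, for the shift $k=4n-3$, a labeling with vertex labels in $\{k,k+1,\dots,k+m\}$ and induced edge‑label set $\{k,\dots,k+m-1\}$ — this is exactly the $k$‑geometric mean property applied to cycles. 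Since $4(n-1)=4n-4<4n-3=k$, the two vertex‑label sets are disjoint, so their union is an injection into $\{1,\dots,q+1\}$, and the two edge‑label sets $\{1,\dots,4n-4\}$ and $\{4n-3,\dots,q\}$ partition $\{1,\dots,q\}$, which is what is required. The degenerate case $n=1$ (then $Q_1=K_1$, so $C_m+Q_1$ is just $C_m$ plus an isolated vertex, to which we assign the single label left unused by a geometric mean labeling of $C_m$) is dealt with first.

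For lemma (b) I would arrange the integers $k,k+1,\dots,k+m-1$ in this cyclic order around $C_m$, leaving $k+m$ unused, and set $j:=\big\lfloor\sqrt{k(k+m-1)}\big\rfloor-k$. From $k<\sqrt{k(k+m-1)}<k+m-1$ one gets $0\le j\le m-2$, and a one‑line inequality shows $j=0$ only when $m\le 3$; hence $1\le j\le m-2$ whenever $m\ge 4$. On the edge joining $k+i-1$ and $k+i$ the value $\sqrt{(k+i-1)(k+i)}$ lies strictly between $k+i-1$ and $k+i$, so that edge may be labeled $k+i-1$ (floor) or $k+i$ (ceiling); choose floor for $i=1,\dots,j$ and ceiling for $i=j+1,\dots,m-1$, and label the remaining ``wrap'' edge $\{k+m-1,k\}$ with $\big\lfloor\sqrt{k(k+m-1)}\big\rfloor=k+j$. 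The resulting edge labels are $\{k,\dots,k+j-1\}\cup\{k+j\}\cup\{k+j+1,\dots,k+m-1\}=\{k,\dots,k+m-1\}$. The case $m=3$ is a direct three‑vertex check: labels $k,k+1,k+2$ around $C_3$ give edge labels $k$ (floor on $\{k,k+1\}$), $k+1$ (ceiling on $\{k,k+2\}$) and $k+2$ (ceiling on $\{k+1,k+2\}$).

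For lemma (a) I would walk along the spine $u_1u_2\cdots u_n$ of $Q_n$ and assign to the $i$‑th quadrilateral block (the $C_4$ on $\{u_i,x_i,u_{i+1},y_i\}$) the consecutive run of four edge labels $\{4(i-1)+1,\dots,4i\}$; inside each block this is a four‑edge verification of floor/ceiling choices of the same flavour as the $C_4$ computation above, with the spine vertex $u_{i+1}$ being the one vertex label handed on to block $i+1$. This is essentially the known geometric mean labeling of $Q_n$ reorganized block‑by‑block; the extra point (beyond the known proof that $Q_n$ is a geometric mean graph) is to check that, organized this way, block $i$ only uses vertex labels in $\{4i-4,\dots,4i\}$, so that over all $n-1$ blocks no label exceeds $4(n-1)$.

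The step I expect to be the real work is lemma (a): making the block‑by‑block labeling of $Q_n$ pass exactly one shared spine label from each $C_4$ to the next while each block absorbs precisely four consecutive edge labels, and simultaneously keeping every vertex label inside $\{1,\dots,4(n-1)\}$ for all $n\ge 2$ (small cases included). By contrast, lemma (b) reduces to the clean index $j$ and two elementary square‑root inequalities, and the gluing step is immediate from the numerical gap $4n-4<4n-3$.
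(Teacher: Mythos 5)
Your proposal is correct in substance but runs the construction in the opposite direction from the paper. The paper never reproves this cited theorem directly; its own machinery (the finite-join theorem built on Theorems \ref{cycle} and \ref{quadsnake}) handles $C_m+Q_n$ by labeling the cycle \emph{first}, with a $1$-geometric mean labeling of $C_m$ inside $\{1,\dots,m\}$ (cycles never need the top label $k+q$, as the paper observes), and then giving $Q_n$ an $(m+1)$-geometric mean labeling whose vertex labels occupy $\{m+1,\dots,q+1\}$; disjointness is automatic and no extra property of $Q_n$ is required. You put the snake at the bottom and the cycle on top: your lemma (b) is then exactly the paper's Theorem \ref{cycle} (correctly reargued via the index $j$), but your lemma (a) is a genuinely stronger statement than Theorem \ref{quadsnake}, since the paper's labeling of $Q_n$ has $\psi(u_n)=k+q$ and so does \emph{not} avoid the top label. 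Lemma (a) is nevertheless true, and your block scheme does realize it: take $u_1=1$, $v_1=2$, $w_1=3$, and for $i\ge 2$ set $u_i=4i-4$, $v_i=4i-2$, $w_i=4i-1$; in block $1$ the edges $u_1v_1,\,u_1u_2,\,v_1w_1,\,w_1u_2$ receive $1,2,3,4$, and in block $i\ge 2$ the ceilings on $u_iv_i,\,u_iu_{i+1},\,v_iw_i,\,w_iu_{i+1}$ give $4i-3,\,4i-2,\,4i-1,\,4i$ (using $a(a+2)=(a+1)^2-1$ and $a(a+4)=(a+2)^2-4$ with $a=4i-4$), so every vertex label stays in $\{1,\dots,4n-4\}$. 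As written, however, you only announce this verification as ``the real work'' rather than carrying it out, so your text is a complete proof of lemma (b) plus the gluing, and a correct but unexecuted plan for lemma (a). The trade-off is clear: the paper's ordering needs nothing beyond its stated theorems and scales to many cycles and crowns with the single snake on top, while your ordering costs the extra lemma but, once completed, shows quadrilateral snakes never need their top label, which would let such a component be placed anywhere in the join rather than only last.
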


\begin{thm}[\cite{som3}]
$(C_m\odot K_1) + Q_n$ is a geometric mean graph.
\end{thm}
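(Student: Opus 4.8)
The graph $H=(C_m\odot K_1)+Q_n$ carries three kinds of edges, and a correct argument must track all three: the $2m$ edges internal to the crown, the $4(n-1)$ edges internal to the quadrilateral snake, and the $2m(3n-2)$ \emph{cross edges} that the join places between every vertex of the crown and every vertex of the snake. Set $p=2m+3n-2$ and $q=2m+4(n-1)+2m(3n-2)$. The plan is to construct an injection $\psi:V(H)\to\{1,\dots,q+1\}$ so that the induced floor/ceiling geometric means realise each of $1,\dots,q$ exactly once. First I would fix notation: cycle vertices $u_1,\dots,u_m$ with pendants $v_1,\dots,v_m$ (where $u_iv_i\in E$), path vertices $w_1,\dots,w_n$ of the snake, and subdivision vertices $x_i,y_i$ on the $i$-th square $w_ix_iy_iw_{i+1}$.

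Since the cross edges outnumber the internal ones (they account for $6mn-4m$ of the $q$ labels), the heart of the labeling must be carried by them. I would therefore split $\{1,\dots,q+1\}$ into a low block, assigned to the crown, and a high block, assigned to the snake, so that every product $a_ib_j$ of a crown label $a_i$ and a snake label $b_j$ lands in a controlled middle range; the crown-internal edges are then squeezed into the bottom of $\{1,\dots,q\}$ and the snake-internal edges into the top. The remaining design freedom is the choice of the exact spacings $a_1<\dots<a_{2m}$ and $b_1<\dots<b_{3n-2}$, together with the independent floor-or-ceiling choice available on each individual edge, and this freedom is what must be spent to turn the doubly-indexed family $\{\sqrt{a_ib_j}\}$ into a run of consecutive integers with no repeats.

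The hard part is exactly this cross-edge step: forcing $\{\lfloor\sqrt{a_ib_j}\rfloor\text{ or }\lceil\sqrt{a_ib_j}\rceil\}$ to tile a consecutive block. This is a genuine combinatorial condition on the two sequences rather than a routine check, and it is the reason the sparser predecessors $C_m+Q_n$ and $(C_m\odot K_1)+T_n$ do not give the result for free. I would order the cross products lexicographically — fixing a crown label and increasing the snake label produces a monotone run — and choose the crown spacing commensurately with the snake spacing so that consecutive runs interleave, using the rounding choice to repair the bounded overlaps at the seams between runs. A feasibility check reassures me that no global parity- or degree-type obstruction intervenes: the vertex that must carry label $1$ can be taken to be a crown pendant or a snake subdivision vertex, of degree $\min(2m+2,3n-1)$, and for the relevant ranges of $m,n$ this stays below $\lceil\sqrt{q+1}\rceil$, so there is room among the small edge labels for all of its incident edges — the obstruction that kills dense complete graphs does not apply because $H$ is very far from complete.

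Finally I would confirm that the three label classes are disjoint and jointly exhaust $\{1,\dots,q\}$ — crown-internal at the bottom, cross-edge labels in the long central block, snake-internal at the top — checking the two seams explicitly, and I would dispose of the degenerate small cases ($n=1$, where $Q_1$ collapses to a single universal vertex, and the smallest $m$) by hand, since the spacing argument presumes $m$ and $n$ are not too small.
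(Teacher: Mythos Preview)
You have misread the paper's definition of ``$+$''. In this paper the \emph{join} is \textbf{not} the standard graph-theoretic join that inserts all cross edges; Definition~4 says explicitly that ``the join $G_1+G_2$ of two graphs $G_1$ and $G_2$ with disjoint set of vertices is the union of two graphs $G_1$ and $G_2$.'' Consequently $(C_m\odot K_1)+Q_n$ is simply the disjoint union of a crown and a quadrilateral snake, with $p=2m+(3n-2)$ vertices and $q=2m+4(n-1)$ edges. There are no cross edges whatsoever, so the entire core of your argument --- the $2m(3n-2)$ cross edges, the ``long central block'', the interleaving of doubly-indexed products $\sqrt{a_ib_j}$ --- is addressing a graph that is not the one in the statement. (That this is the intended reading is confirmed throughout the paper: Theorem~\ref{comnongeo} computes $|E(G)|=|E(G_1)|+|E(G_2)|$, and the example after the main theorem labels $C_5+C_3+(C_4\odot K_1)+P_4$ as four separate components with no edges between them.)

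Once the correct graph is in view, the proof is almost immediate and is exactly the mechanism behind the paper's main theorem on finite joins. The crown $C_m\odot K_1$ has $2m$ vertices and $2m$ edges; the $1$-geometric mean labeling built in Theorem~\ref{crown} uses only the vertex labels $\{1,2,\dots,2m\}$ and produces edge labels $\{1,\dots,2m\}$, leaving the value $2m+1$ untouched. One then applies Theorem~\ref{quadsnake} to give $Q_n$ a $(2m{+}1)$-geometric mean labeling, which uses vertex labels in $\{2m+1,\dots,2m+4(n-1)+1\}$ and edge labels $\{2m+1,\dots,2m+4(n-1)\}$. The two labelings occupy disjoint blocks of integers and concatenate to a geometric mean labeling of the disjoint union. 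No cross-edge analysis, spacing argument, or small-case handling is needed.
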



Here we give some new results on geometric mean graphs and establish a general result on finite join of paths, cycles, combs, crowns, triangle snakes, and quadrilateral snakes.  

\section{Main results}
\subsection{Stars graphs}
\begin{lem}\label{notgeo}
Let $G$ be a graph. If $|V(G)|>|E(G)|=1$, then $G$ is not a geometric mean graph.
\end{lem}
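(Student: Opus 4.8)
The plan is a short counting argument that ignores the edge labels entirely and uses only that a geometric mean labeling is an \emph{injection} on the vertices. First I would recall that, by definition, if $G$ is a geometric mean graph then there is an injection $\psi : V(G) \to \{1, 2, \ldots, q+1\}$ with $q = |E(G)|$. Since an injection cannot increase cardinality, the mere existence of such a $\psi$ forces $|V(G)| \le q+1$. Equivalently, $|V(G)| \ge |E(G)| + 2$ already rules out $G$ being a geometric mean graph, whatever the value of $q$ is.

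Next I would specialize to $q = |E(G)| = 1$, so that the codomain $\{1, 2, \ldots, q+1\}$ is the two-element set $\{1, 2\}$. A graph with exactly one edge consists of a single edge $uv$ together with $|V(G)| - 2$ isolated vertices; when $|V(G)| = 2$ this graph is $K_2 = P_2$, which \emph{is} a geometric mean graph (give $uv$ the label $\lfloor\sqrt{1\cdot 2}\rfloor = 1$), so the hypothesis $|V(G)| > |E(G)| = 1$ is only informative in the range $|V(G)| \ge 3$. There, $|V(G)| \ge 3 > 2 = q+1$ contradicts the existence of the injection $\psi$; hence $G$ admits no geometric mean labeling.

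There is essentially no obstacle here: the whole proof is the pigeonhole principle applied to $\psi$, and the only point requiring care is the boundary case $|V(G)| = 2$ noted above, which is why the statement is to be read with $|V(G)| \ge 3$. If one prefers, the argument can be recorded in the sharper form ``$|V(G)| \ge |E(G)| + 2$ implies $G$ is not a geometric mean graph,'' from which the lemma is simply the instance $|E(G)| = 1$.
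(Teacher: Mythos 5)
Your proof is correct and is essentially the paper's own argument: the paper likewise disposes of the lemma in one line by observing that when $|V(G)|$ exceeds the size of the label set $\{1,\dots,q+1\}$ no injection $\psi$ can exist. Your remark about the boundary case ($K_2$ with $|V|=2$, $|E|=1$ is a geometric mean graph) correctly flags that the hypothesis ``$|V(G)|>|E(G)|=1$'' should be read as $|V(G)|>|E(G)|+1$, which is the form actually invoked later in Theorem \ref{comnongeo}.
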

\begin{proof}
If $|V(G)|>|E(G)|=1$, then the injective vertex labeling does not exist and hence $G$ cannot be a geometric mean graph.
\end{proof}
\begin{thm}\label{comnongeo}
The join of any two graphs from the set of trees (including paths, combs, and stars), triangle snakes, and quadrilateral snakes, is not a geometric mean graph.
\end{thm}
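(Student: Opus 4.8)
The plan is to argue by contradiction from the injection that a geometric mean labeling must supply. Recall that in this paper the join $G_1+G_2$ is, by its Definition, the union $G_1\cup G_2$ on a disjoint vertex set, so throughout I treat $G=G_1+G_2$ as having $V(G)=V(G_1)\cup V(G_2)$ (disjointly), with $p=p_1+p_2$ vertices and $q=q_1+q_2$ edges, where $p_i,q_i$ denote the order and size of $G_i$. (This reading is forced: under the ordinary edge-adding join one has $P_2+P_2=K_4$, which is a geometric mean graph via the vertex labels $1,3,6,7$, so the statement can only be the intended one with the paper's definition.) I would first record, in sharp form, the necessary condition underlying Lemma~\ref{notgeo}: a geometric mean labeling is an injection $\psi:V(G)\to\{1,\dots,q+1\}$, so every geometric mean graph satisfies $p\le q+1$; equivalently, $p\ge q+2$ forces $G$ to fail. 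I would also stress that disconnectedness is itself no obstruction — $nC_3$ is a geometric mean graph — so the contradiction must come entirely through the parameters and the label structure, never through connectivity.

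The second step is a bookkeeping of $p-q$ for the three families. A tree (in particular a path, a comb $P_n\odot K_1$, or a star $S_n$) has $q=p-1$, so $p-q=1$. A triangular snake $T_n$ has $2n-1$ vertices and $3(n-1)$ edges and a quadrilateral snake $Q_n$ has $3n-2$ vertices and $4(n-1)$ edges, so each snake satisfies $p-q=2-n\le 0$ for $n\ge 2$. Consequently, when both $G_1$ and $G_2$ are trees we get $p-q=(p_1-q_1)+(p_2-q_2)=2$, hence $p=q+2$, and the first step already rules out a geometric mean labeling. This disposes of every pair of trees, covering the path, comb, and star cases named in the statement.

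The remaining cases — at least one summand a genuine snake — satisfy $p-q\le 1$, so the counting bound is no longer decisive and a finer analysis of the labels is needed. Here I would exploit the two extremes of the label set at once. At the bottom, the edge carrying label $1$ has endpoint product in $\{2,3\}$, forcing the vertex labeled $1$ to be adjacent to the vertex labeled $2$ or $3$; more generally a vertex labeled $a$ has every incident edge label in $\{1,\dots,\lceil\sqrt{a(q+1)}\rceil\}$ and hence degree at most $\lceil\sqrt{a(q+1)}\rceil$. At the top, the edge labeled $q$ needs endpoint product exceeding $(q-1)^2$, which (by checking the finitely many pairs) can only be realised by two of the four largest labels $q-2,q-1,q,q+1$, and each successive large label similarly confines its two endpoints to the top few vertices. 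Since an edge lies inside a single component, the top vertices that support the largest labels must be mutually adjacent within one of $G_1,G_2$. The aim is then to show that the degree-$2$/degree-$4$ skeleton of a snake (apex and end vertices of degree $2$, interior path vertices of degree $4$) cannot simultaneously host this clustering of the extreme labels and cover all the small labels, contradicting the existence of $\psi$.

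The step I expect to be the real obstacle is precisely this last one: turning ``the largest edge labels concentrate among the largest vertices'' into a contradiction with the sparse adjacency of a snake, uniformly in $n$ and across the mixed pairs (tree with snake, snake with snake). The difficulty is intrinsic: once a snake is present the bound $p\le q+1$ holds with room to spare, so no contradiction can issue from global parameters, and one is forced into an interval/adjacency analysis of how the label set $\{1,\dots,q\}$ can be partitioned between the two components. I would attack it by proving that whichever labels a component receives must form an admissible set for its degree sequence, and by tracking the labels $1$ and $q$ into their (possibly distinct) components; making this casework exhaustive over the finitely many residual small configurations is where the bulk of the work will lie, and it is the part I am least certain closes cleanly as stated.
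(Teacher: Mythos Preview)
Your tree--tree case is exactly the paper's argument: each summand satisfies $p_i=q_i+1$, hence $p=q+2>q+1$ and no injection $V(G)\to\{1,\dots,q+1\}$ can exist. The paper, however, applies this same inequality to \emph{all} pairs, implicitly asserting $|V(G_i)|\ge|E(G_i)|+1$ for triangular and quadrilateral snakes as well. As you correctly compute, $T_n$ and $Q_n$ have $p-q=2-n\le 0$ for $n\ge 2$, so that step is simply false and the paper's proof does not extend beyond the tree--tree case. You have not missed a trick here; the paper's displayed chain of inequalities is invalid once a snake enters.

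Your proposed rescue for the snake cases cannot be completed, because the statement itself is false there. Since $T_2=C_3$ and $Q_2=C_4$, the paper's own cited results furnish counterexamples: $T_2+T_2=2C_3$ is a geometric mean graph (the $nC_3$ theorem), and $Q_2+Q_2=C_4+C_4$ is a geometric mean graph (the $C_m+C_n$ theorem). For a non-degenerate mixed example, $P_2+T_3$ is a geometric mean graph: label $T_3$ with vertex labels $1,2,4,5,7$ as in the paper's own $k=1$ labeling of $T_n$, giving edge labels $\{1,\dots,6\}$, and label the two vertices of $P_2$ by $6$ and $8$ with edge label $\lceil\sqrt{48}\,\rceil=7$. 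So the interval/adjacency analysis you sketch is not merely hard to close --- there is nothing to close. The right repair is to restrict the theorem to pairs of trees, where your (and the paper's intended) counting argument is already a complete proof.
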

\begin{proof} Let $G$ be a join of graphs $G_1$ and $G_2$, which are from the set of trees, triangle snakes, and quadrilateral snakes. Then\begin{align*}
|V(G)|&=|V(G)_1|+|V(G_2)|\\&\geq \left(|E(G)_1|+1\right)+\left(|V(G_2)|+1\right)\\&> |E(G)_1|+|E(G_2)|+1\\&=|E(G)|+1. 
\end{align*} 
By Lemma \ref{notgeo}, $G$ is not a geometric mean graph.
\end{proof}
\begin{thm}
If the star $S_n$ is a geometric mean graph then $n\leq 7$.
\end{thm}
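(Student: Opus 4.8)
The plan is to exploit the fact that $S_n$ has $q=n$ edges and $p=n+1$ vertices, so that a geometric mean labeling $\psi$ of $S_n$ is forced to be a \emph{bijection} from $V(S_n)$ onto $\{1,2,\dots,n+1\}$. Write $c$ for the label assigned to the centre of $S_n$; then the leaf labels are exactly the elements of $\{1,\dots,n+1\}\setminus\{c\}$, and every edge label has the form $\lfloor\sqrt{c\,\ell}\,\rfloor$ or $\lceil\sqrt{c\,\ell}\,\rceil$ for a leaf label $\ell$. In this way the whole question is reduced to a statement about the single number $c$ together with the products $c\ell$. We may assume $n\ge 3$, since otherwise the claim is immediate.

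First I would pin down $c$. Because the edge label set is $\{1,\dots,n\}$, some edge must carry the label $1$; but an edge whose product is $c\ell$ can be labelled $1$ only when $c\ell\in\{1,2,3\}$ (for $c\ell\ge 4$ both $\lfloor\sqrt{c\ell}\,\rfloor$ and $\lceil\sqrt{c\ell}\,\rceil$ are at least $2$). Since every leaf label satisfies $\ell\ge 1$, this forces $c\le 3$. Next I would look at the opposite extreme: some edge must carry the label $n$. As $c\le 3<n+1$, the value $n+1$ is itself a leaf label, so the largest product occurring on any edge is $c(n+1)\le 3(n+1)$; hence every edge label is at most $\lceil\sqrt{3(n+1)}\,\rceil$, and therefore $n\le\lceil\sqrt{3(n+1)}\,\rceil$. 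This inequality is equivalent to $n-1<\sqrt{3(n+1)}$, i.e. $(n-1)^2<3(n+1)$, i.e. $n^2-5n-2<0$, which fails as soon as $n\ge 6$. Thus $n\le 5$, and in particular $n\le 7$.

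I do not expect a serious obstacle: the argument is short, and its one real idea is that in a star every edge passes through the centre, so the same label $c$ is simultaneously responsible for producing a product small enough (at most $3$, to realise the edge label $1$) and a product large enough (more than $(n-1)^2$, to realise the edge label $n$), and these two demands clash once $n$ grows. The only point needing care is the bookkeeping of the exact ranges of $x$ for which $1$, respectively $n$, lies in $\{\lfloor\sqrt{x}\,\rfloor,\lceil\sqrt{x}\,\rceil\}$, keeping both the floor and the ceiling option available. If one wished a sharper statement, splitting into the three subcases $c=1,2,3$ pushes the bound down further, but $c\le 3$ already suffices for the claimed $n\le 7$.
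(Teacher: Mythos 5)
Your proof is correct, and it takes a genuinely different (and in fact sharper) route than the paper's. The paper works with the whole range of conceivable edge labels: every edge label lies between $\lfloor\sqrt{k}\rfloor$ and $\lceil\sqrt{k(n+1)}\rceil$, where $k$ is the centre label, so this interval must contain at least $n$ integers; that forces $k\in\{n-2,n-1,n,n+1\}$, and a two-case computation then gives $n\le 5$ in one case and $n\le 8$ in the other, with $n=8$ finally excluded by an unexplained direct check. You instead use the two extreme labels that must actually occur: the edge label $1$ can only arise from a product $c\ell\le 3$, which (since every edge of a star passes through the centre) forces $c\le 3$, and the edge label $n$ then forces $n\le\lceil\sqrt{c(n+1)}\rceil\le\lceil\sqrt{3(n+1)}\rceil$, which fails for $n\ge 6$. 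This eliminates both the case analysis and the ad hoc check at $n=8$, and it yields the stronger conclusion $n\le 5$, which of course implies the stated $n\le 7$. Your bound also exposes an inconsistency in the paper itself: the displayed ``geometric mean labelings'' of $S_6$ and $S_7$, with centre labelled $7$, are not valid, since the smallest edge product there is $7\cdot 1=7$ and $\lfloor\sqrt{7}\rfloor=2$, so the edge label $1$ can never be produced; this is exactly what your argument predicts, and the correct threshold is $n\le 5$. One small caveat on your closing aside: splitting further into the subcases $c=1,2,3$ cannot push the bound below $5$, because $S_5$ does admit a geometric mean labeling (centre $3$, leaves $1,2,4,5,6$), so $5$ is sharp; the subcase analysis only sharpens the admissible $n$ for $c=1$ and $c=2$ individually.
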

\begin{proof}
A star $S_n$ has exactly $n+1$ vertices and $n$ edges. Hence all labels in  $\{1,2,\dots,n+1\}$ are used on vertices labeling. 

Assume that there exists a geometric mean labeling of $S_n$. Let the only vertex of degree greater than 1 be labelled by $k\in\{1,2,\dots,n+1\}$. Therefore the lowest possible edge label is $\lfloor\sqrt{1\cdot k}\rfloor$ and the highest posible edge label is $\lceil\sqrt{k\cdot (n+1)}\rceil$

Since the graph need at least $n$ edge labels. It is nessesary that \begin{equation}\lceil\sqrt{k\cdot (n+1)}\rceil-\lfloor\sqrt{1\cdot k}\rfloor+1\geq n.\label{firstcon}\tag{$\star$}\end{equation}
Since $\lfloor\sqrt{1\cdot k}\rfloor\geq 1$, to achieve (\ref{firstcon}), we need $\lceil\sqrt{k\cdot (n+1)}\rceil$ to be greater than $n$. Thus $k$ is required to be in $\{n-2,n-1,n,n+1\}$. 

\noindent\textbf{Case $k=n-2$ or $n-1$;}\\
Consider
\begin{align*}
n&\leq\lceil\sqrt{k\cdot (n+1)}\rceil-\lfloor\sqrt{1\cdot k}\rfloor+1\\
&\leq n-\lfloor\sqrt{1\cdot (n-2)}\rfloor+1.
\end{align*}
Hence $\lfloor\sqrt{1\cdot (n-2)}\rfloor=1$. Which implies $n\in\{1,2,\dots,5\}$.

\noindent\textbf{Case $k=n$ or $n+1$;}\\
Consider
\begin{align*}
n&\leq\lceil\sqrt{k\cdot (n+1)}\rceil-\lfloor\sqrt{1\cdot k}\rfloor+1\\
&\leq(n+1)-\lfloor\sqrt{1\cdot n}\rfloor+1.
\end{align*}
Hence $\lfloor\sqrt{1\cdot n}\rfloor=2$ or $1$. Which implies $n\in\{1,2,\dots,8\}$. However, without too much effort, one can see that when $n=8$, both $k=8$ and $k=9$ do not give a geometric mean labeling. 
\end{proof}

\begin{ex}
Here are examples of geometric mean labeling of $S_n$ for $n=1$ to $7$.
\end{ex}
\begin{center}
$S_1:$ \begin{tikzpicture}[scale=.5]
    \tikzstyle{every node}=[draw,circle,fill=white,minimum size=4pt,
                            inner sep=0pt]

    \draw (0,0) node (0) [label=left:$1$] {}
 -- ++(0:1cm) node (0) [label=right:$2$] {};
  
\end{tikzpicture}\qquad\quad
$S_2:$ \begin{tikzpicture}[scale=.5]
    \tikzstyle{every node}=[draw,circle,fill=white,minimum size=4pt,
                            inner sep=0pt]

    \draw (0,0) node (0) [label=right:$1$] {}
        -- ++(270:1cm) node (1) [label=right:$2$] {}
        -- ++(270:1cm) node (2) [label=right:$3$] {};

\end{tikzpicture}\qquad\qquad
$S_3:$ \begin{tikzpicture}[scale=.5]
    \tikzstyle{every node}=[draw,circle,fill=white,minimum size=4pt,
                            inner sep=0pt]

    \draw (0,0) node (1) [label=left:$1$] {}
        -- ++(300:1cm) node (0) [label=left:$2$] {}
        -- ++(240:1cm) node (2) [label=left:$3$] {};
 	\node (3) at (1.5,-1.3) [label=right:$4$]{ };

    \draw (0) -- (3);
  
\end{tikzpicture}\qquad\quad
$S_4:$ \begin{tikzpicture}[scale=.5]
    \tikzstyle{every node}=[draw,circle,fill=white,minimum size=4pt,
                            inner sep=0pt]

    \draw (0,0) node (1) [label=above right:$3$] {};
    \draw (0,1) node (2) [label=left:$1$] {};
    \draw (0,-1) node (3) [label=left:$4$] {};
    \draw (-1,0) node (4) [label=left:$2$] {};
    \draw (1,0) node (5) [label=right:$5$] {};

    \draw (1) -- (2);
    \draw (1) -- (3);
    \draw (1) -- (4);
    \draw (1) -- (5);
  
\end{tikzpicture}\\\vskip1cm
$S_5:$ \begin{tikzpicture}[scale=.5]
    \tikzstyle{every node}=[draw,circle,fill=white,minimum size=4pt,
                            inner sep=0pt]

    \draw (0,0) node (1) [label=below:$3$] {}
        -- ++(90:1cm) node (2) [label=left:$1$] {}
        -- (1)
        -- ++(162:1cm) node (3) [label=left:$2$] {}
        -- (1)
        -- ++(18:1cm) node (4) [label=right:$4$] {}
        -- (1)
        -- ++(234:1cm) node (5) [label=left:$5$] {}
        -- (1)
        -- ++(306:1cm) node (6) [label=right:$6$] {}
        ;
        

  
\end{tikzpicture}\qquad\qquad
$S_6:$ \begin{tikzpicture}[scale=.5]
    \tikzstyle{every node}=[draw,circle,fill=white,minimum size=4pt,
                            inner sep=0pt]

        \draw (0,0) node (1) [label=right:$7$] {}
            -- ++(90:1cm) node (2) [label=left:$1$] {}
            -- (1)
            -- ++(150:1cm) node (3) [label=left:$2$] {}
            -- (1)
            -- ++(210:1cm) node (4) [label=left:$3$] {}
            -- (1)
            -- ++(270:1cm) node (5) [label=left:$4$] {}
            -- (1)
            -- ++(330:1cm) node (6) [label=right:$5$] {}
            -- (1)
			-- ++(30:1cm) node (7) [label=right:$6$] {}
            ;
  
\end{tikzpicture}\qquad\qquad
$S_7:$ \begin{tikzpicture}[scale=.5]
    \tikzstyle{every node}=[draw,circle,fill=white,minimum size=4pt,
                            inner sep=0pt]

    \draw (0,0) node (1) [label=below:$7$] {}
        -- ++(90:1cm) node (2) [label=left:$1$] {}
        -- (1)
        -- ++(39:1cm) node (3) [label=right:$8$] {}
        -- (1)
        -- ++(141:1cm) node (4) [label=left:$2$] {}
        -- (1)
        -- ++(193:1cm) node (5) [label=left:$3$] {}
        -- (1)
        -- ++(347:1cm) node (6) [label=right:$6$] {}
         -- (1)
                -- ++(244:1cm) node (7) [label=left:$4$] {}
                -- (1)
                -- ++(296:1cm) node (8) [label=right:$5$] {}
                ;


  
\end{tikzpicture}
\end{center}
\subsection{$k$-geometric mean graphs}
Here we define $k$-geometric mean labeling as a tool to proof our main theorem.

\begin{de}\label{kgeo}A graph $G=(V, E)$ with $p$ vertices and $q$ edges is said to be a \emph{$k$-geometric mean graph} ($k$ ia a positive integer) if there is an injection $\psi :V(G)\to \{k,k+1,\dots,k+q\}$ such that, when each edge $uv\in E(G)$ is assigned the label $\lfloor\sqrt{\psi(u)\psi(v)}\rfloor$ or $\lceil\sqrt{\psi(u)\psi(v)}\rceil$, the resulting edge label set is $\{k,k+1,...,k+q-1\}$ and $\psi$ is called a \emph{$k$-geometric mean labeling} of $G$. 
\end{de}

Note that from Definition \ref{kgeo}, a geometric mean labeling is a $1$-geometric mean labeling.

\begin{thm}\label{path}
Let $n,k\in \mathbb{N}$, $P_n$ is a $k$-geometric mean graph.
\end{thm}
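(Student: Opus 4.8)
The plan is to use the most natural labeling. Write the vertices of $P_n$ in path order as $v_1, v_2, \dots, v_n$ and set $\psi(v_i) = k + i - 1$. Since $P_n$ has $p = n$ vertices and $q = n-1$ edges, the target set $\{k, k+1, \dots, k+q\}$ has exactly $n$ elements, so this $\psi$ is automatically a bijection onto it, in particular the injection required by Definition \ref{kgeo}.

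Next I would compute the induced edge labels. The edge $v_i v_{i+1}$ joins the vertices labelled $j := k+i-1$ and $j+1$, so its prescribed label is $\lfloor\sqrt{j(j+1)}\rfloor$ or $\lceil\sqrt{j(j+1)}\rceil$. The key (elementary) observation is that for every positive integer $j$ one has $j^2 \le j(j+1) < (j+1)^2$, hence $j \le \sqrt{j(j+1)} < j+1$, so that $\lfloor\sqrt{j(j+1)}\rfloor = j$. Assigning to each edge $v_i v_{i+1}$ the floor value therefore gives it the label $k+i-1$.

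Finally I would read off the edge label set: as $i$ runs over $1, \dots, n-1$, the labels $k+i-1$ run over $\{k, k+1, \dots, k+n-2\} = \{k, k+1, \dots, k+q-1\}$, which is precisely the set demanded by Definition \ref{kgeo}. Hence $\psi$ is a $k$-geometric mean labeling and $P_n$ is a $k$-geometric mean graph. There is no genuine obstacle here; the only point needing verification is the floor computation above, and since the construction works uniformly in both $n$ and $k$ it recovers, in particular, the already-known statement that every path is a geometric mean graph (the case $k=1$).
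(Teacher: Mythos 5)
Your proposal is correct and is essentially the paper's own proof: the same labeling $\psi(v_i)=k+i-1$ (written there as $(k-1)+i$) with the floor of the geometric mean assigned to each edge, yielding the edge label set $\{k,\dots,k+q-1\}$. The only difference is that you spell out the verification $\lfloor\sqrt{j(j+1)}\rfloor=j$, which the paper leaves implicit.
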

\begin{proof}Let $P_n$ be the path $u_1u_2\dots u_n$.

Define a function $\psi:V(P_n)\rightarrow \{k,k+1,\dots,k+(n-1)=k+q\}$ by

\hspace{2cm}$\psi(u_i)=(k-1)+i,\qquad1\leq i \leq n$

For $i=1$ to $n-1$, assign the label $\lfloor\sqrt{\psi(u_i)\psi(u_{i+1})}\rfloor$ to edge $u_iu_{i+1}$.  Then the set of edge labels is $\{k,k+1,...,k+(n-1)-1=k+q-1\}$. Hence $P_n$ is a $k$-geometric mean graph.
\end{proof}
\begin{ex}
A $3$-geometric mean labeling of $P_5$ is given below.
\end{ex}
\begin{center}
\begin{tikzpicture}[scale=.5]
                            inner sep=0pt]
    \draw (0,0) node[draw,circle,fill=white,minimum size=4pt,
                                inner sep=0pt] (A) [label=below:$3$] {}
        -- ++(0:3cm) node[draw,circle,fill=white,minimum size=4pt,
                                     inner sep=0pt] (B) [label=below:$4$] {}
       --  ++(0:3cm) node[draw,circle,fill=white,minimum size=4pt,
                                     inner sep=0pt] (C) [label=below :$5$] {}
        -- ++(0:3cm) node[draw,circle,fill=white,minimum size=4pt,
                                     inner sep=0pt] (D) [label=below  :$6$] {}
       --  ++(0:3cm) node[draw,circle,fill=white,minimum size=4pt,
                                     inner sep=0pt] (E) [label=below:$7$] {}
       ++(180:1.5cm)  node (2) [label=above:{$=(6)$}] {}
       ++(180:3cm) node (3) [label=above:{$=(5)$}] {}
       ++(180:3cm) node (4) [label=above:{$=(4)$}] {}
       ++(180:3cm) node (5) [label=above:{$=(3)$}] {}
       ++(90:0.7cm)  node (2) [label=above:{$\lfloor\sqrt{3\cdot 4}\rfloor$}] {}
              ++(0:3cm) node (3) [label=above:{$\lfloor\sqrt{4\cdot 5}\rfloor$}] {}
              ++(0:3cm) node (4) [label=above:{$\lfloor\sqrt{5\cdot 6}\rfloor$}] {}
              ++(0:3cm) node (5) [label=above:{$\lfloor\sqrt{6\cdot 7}\rfloor$}] {}
        ;

\end{tikzpicture}
\end{center}

\begin{thm}\label{cycle}
Let $n,k\in \mathbb{N}$ and $n\geq 3$, $C_n$ is a $k$-geometric mean graph.
\end{thm}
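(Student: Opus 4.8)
The plan is to mimic the strategy used for paths in Theorem~\ref{path}, but to close the cycle we will need to handle the one extra edge $u_nu_1$ carefully. Write $C_n$ as the cycle $u_1u_2\dots u_nu_1$, so that $q = n$ and we must produce an injection $\psi:V(C_n)\to\{k,k+1,\dots,k+n\}$ whose induced edge labels are exactly $\{k,k+1,\dots,k+n-1\}$. The natural first attempt is to label the path portion $u_1u_2\dots u_n$ consecutively and reserve the remaining unused value for the vertex that closes the cycle. Concretely, I would try $\psi(u_i) = (k-1)+i$ for $1\le i\le n-1$ and $\psi(u_n) = k+n$ (the top of the range), leaving the value $k+n-1$ unused on vertices. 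Then for $i=1,\dots,n-2$ the edge $u_iu_{i+1}$ gets label $\lfloor\sqrt{\psi(u_i)\psi(u_{i+1})}\rfloor = (k-1)+i$ exactly as in Theorem~\ref{path}, which already realizes the labels $k,k+1,\dots,k+n-2$. It then remains to check that the two edges $u_{n-1}u_n$ and $u_nu_1$ can be assigned, via floor or ceiling of the geometric mean, the two still-missing labels, namely $k+n-1$ and $k+n-1$... wait — only $k+n-1$ is missing, so one of these two edges must repeat no label; this forces a reconsideration of the assignment near $u_n$.

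So the more careful step is the endgame near the "closing" vertex. I would instead set $\psi(u_n)=k+n-1$ and $\psi(u_{n-1}) = k+n$, swapping the last two values, so the unused vertex value is again something in the middle only if needed; more robustly, I expect the right choice is $\psi(u_i)=(k-1)+i$ for $1\le i\le n-2$, together with a small permutation of $\{k+n-2,k+n-1,k+n\}$ on $\{u_{n-1},u_n\}$ (one of these three values going unused), chosen so that: (i) $\psi(u_{n-2})\psi(u_{n-1})$ rounds to $k+n-2$, (ii) $\psi(u_{n-1})\psi(u_n)$ rounds to $k+n-1$, and (iii) the closing edge $\psi(u_n)\psi(u_1)=\psi(u_n)\cdot k$ rounds to $k$. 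For (iii) to work we need $\lfloor\sqrt{k\,\psi(u_n)}\rfloor=k$ or $\lceil\sqrt{k\,\psi(u_n)}\rceil=k$, i.e. $k\,\psi(u_n)$ lies in roughly $[(k-1)^2{+}1,\;(k{+}1)^2{-}1]$ — but $\psi(u_n)\ge k+n-2$, which fails once $n$ is large. This tells me the closing edge should NOT be incident to the smallest vertex value; rather, I should arrange the cycle so that consecutive vertices always carry nearly-consecutive labels. The clean fix is the standard "fold" labeling: traverse the cycle so that $\psi$ restricted to the cycle, read in order, is $k, k+1, k+2, \dots$ up to $k+n$ with exactly one value omitted, and the omitted value sits between two vertices whose labels differ by $2$; that single "gap of 2" edge is the one that produces the repeated-looking situation but in fact lets the floor on one side and ceiling on the other cover two consecutive labels.

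Thus the key steps, in order, are: (1) fix $\psi$ to be the consecutive labeling $\psi(u_i)=(k-1)+i$ on $u_1,\dots,u_{n}$ and check that edges $u_1u_2,\dots,u_{n-1}u_n$ receive, via the floor, precisely $k,\dots,k+n-2$ (immediate from Theorem~\ref{path}); (2) observe the value $k+n$ is unused so far and is available — instead, relabel so that the cycle uses values $\{k,\dots,k+n\}\setminus\{m\}$ for a suitable middle value $m$, with the two neighbors of the "missing slot" carrying $m-1$ and $m+1$; (3) verify that the edge between those two neighbors can be given the label $m$ (check $\lfloor\sqrt{(m-1)(m+1)}\rfloor = \lfloor\sqrt{m^2-1}\rfloor = m-1$, hmm, so that edge actually wants label $m-1$, meaning the arithmetic forces the missing value and the repeated value to coincide only if $m-1$ is already used — so in fact the correct choice is to make the closing edge carry the top label $k+n-1$ by putting $k+n-1$ and $k+n$ on two adjacent vertices, and to check $\lfloor\sqrt{(k+n-1)(k+n)}\rfloor=k+n-1$); and finally (4) check the one boundary edge where consecutivity breaks, using the freedom of choosing floor versus ceiling, and separately dispose of the smallest cases $n=3,4,5$ by the explicit labelings consistent with Theorem~\ref{cycle} for $k=1$ shifted up by $k-1$, verifying the shifted rounding still lands correctly. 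The main obstacle I anticipate is precisely step (4)/(3): closing the cycle means one edge joins two vertices whose $\psi$-values are not consecutive integers, and one must show that $\lfloor\sqrt{\,\cdot\,}\rfloor$ or $\lceil\sqrt{\,\cdot\,}\rceil$ of that product equals the unique remaining edge label; since $\sqrt{a(a+2)} = \sqrt{(a+1)^2-1}$ lies in $(a, a+1)$, its floor is $a$ and its ceiling is $a+1$, so a "gap-2" adjacency is exactly what is needed — the real work is choosing which pair of vertices forms that adjacency so that $a$ and $a+1$ are the two edge labels not yet produced by the consecutive chain, and confirming no label is used twice.
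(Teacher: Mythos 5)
There is a genuine gap here: you never commit to a single concrete labeling and verify it, and the place you stop (``the real work is choosing which pair of vertices forms that adjacency\dots and confirming no label is used twice'') is precisely the content of the theorem. Your plan also rests on a misdiagnosis. In requirement (iii) you assume the closing edge incident to the vertex labelled $k$ must receive the edge label $k$, conclude this fails for large $n$, and therefore reject any labeling in which the closing edge joins the smallest and largest vertex values. That requirement is not needed, and dropping it dissolves the whole difficulty: keep the consecutive labeling $\psi(u_i)=(k-1)+i$ for $1\le i\le n$ (so the value $k+n$ is simply never used), and let the closing edge $u_1u_n$ take the label $h=\bigl\lceil\sqrt{\psi(u_1)\psi(u_n)}\bigr\rceil$, which lies strictly between $k$ and $k+n-1$. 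This is what the paper does: assign floors to the path edges $u_iu_{i+1}$ for $1\le i\le h-k$ (producing $k,k+1,\dots,h-1$) and ceilings for the remaining path edges (producing $h+1,\dots,k+n-1$), so the single ``missing'' label at the switch point is exactly $h$, supplied by the long closing edge. The only verification needed is $k<h\le k+n-1$, which is immediate from $k^2<k(k+n-1)\le (k+n-1)^2$.

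By contrast, your ``fold'' alternative (all adjacencies of gap $1$ or $2$, one middle value omitted) is not carried through: as written in step (2) the labels increase in cyclic order, so the closing edge still has gap about $n$, contradicting the premise of the fold; step (3) then pivots to putting $k+n-1$ and $k+n$ on adjacent vertices, which reintroduces a large gap back to $k$ elsewhere that you never handle; and the small cases $n=3,4,5$ are deferred to ``shift the $k=1$ labeling by $k-1$,'' which you correctly flag as unverified (shifting does not automatically preserve which side of an integer the geometric mean falls on). A fold construction could probably be made to work with a parity case analysis, but as it stands the proposal contains no complete construction, and it explicitly steers away from the simple one the paper uses.
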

\begin{proof}Let $C_n$ be the cycle $u_1u_2\dots u_nu_1$.

Define a function $\psi:V(C_n)\rightarrow \{k,k+1,\dots,k+n=k+q\}$ by

\hspace{2cm}$\psi(u_i)=(k-1)+i,\qquad1\leq i \leq n$

Let $h=\left\lceil\sqrt{k(k+n)}\right\rceil$. We have $k<h<k+n-1$. 

Assign the label $h$ to edge $u_1u_n$. For each edge $u_iu_{i+1}$ assign the label $\left\lfloor\sqrt{\psi(u_i)\psi(u_{i+1})}\right\rfloor$ when $1\leq i \leq h-k=1$, assign the label $\left\lceil\sqrt{\psi(u_i)\psi(u_{i+1})}\right\rceil$ otherwise. Then the set of edge labels is $\{k,k+1,...,k+n-1=k+q-1\}$. Hence $C_n$ is a $k$-geometric mean graph.
\end{proof}
\begin{ex}
A $4$-geometric mean labeling of $C_5$ is given below.
\end{ex}
\begin{center}
\begin{tikzpicture}[scale=.5]
                            inner sep=0pt]
    \draw (0,0) node[draw,circle,fill=white,minimum size=4pt,
                                inner sep=0pt] (A) [label=above:$4$] {}
         ++(216:3.5cm) node[draw,circle,fill=white,minimum size=4pt,
                                     inner sep=0pt] (B) [label=left:$5$] {}
         ++(288:3.5cm) node[draw,circle,fill=white,minimum size=4pt,
                                     inner sep=0pt] (C) [label=below left:$6$] {}
         ++(0:3.5cm) node[draw,circle,fill=white,minimum size=4pt,
                                     inner sep=0pt] (D) [label=below right:$7$] {}
         ++(72:3.5cm) node[draw,circle,fill=white,minimum size=4pt,
                                     inner sep=0pt] (E) [label=right:$8$] {}
         ++(120:2cm) node  (1) [label=right:{$\left\lceil\sqrt{4\cdot 8}\right\rceil$=\color{red}(6)}] {}
		++(285:3.8cm) node  (2) [label=right:{$\left\lceil\sqrt{7\cdot 8}\right\rceil$=(8)}] {}
		++(215:3.5cm) node  (3) [label=below:{$\left\lceil\sqrt{6\cdot 7}\right\rceil$=(7)}] {}
		++(145:3.5cm) node  (4) [label=left:{$\left\lfloor\sqrt{5\cdot 6}\right\rfloor$=(5)}] {}
		++(75:3.8cm) node  (5) [label=left:{$\left\lfloor\sqrt{4\cdot 5}\right\rfloor$=(4)}] {}
        ;

  \path (A) edge [bend right]   (B)
        (B)	edge [bend right]   (C)
        (C) edge [bend right]   (D)
        (D)	edge [bend right]   (E)
        (E) edge [bend right]   (A);
\end{tikzpicture}
\end{center}

\begin{thm}\label{crown}
Let $n,k\in \mathbb{N}$ and $n\geq 3$, $C_n\odot K_1$ is a $k$-geometric mean graph.
\end{thm}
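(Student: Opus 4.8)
The plan is to imitate the shift-by-$(k-1)$ labelling used for paths and cycles in Theorems~\ref{path} and~\ref{cycle}, adapted to the pendant structure of the crown. Write $C_n\odot K_1$ as the cycle $u_1u_2\cdots u_nu_1$ with a pendant vertex $v_i$ attached to $u_i$ for $1\le i\le n$; then $p=q=2n$. I would use the injection $\psi$ defined by $\psi(v_i)=k+2i-2$ and $\psi(u_i)=k+2i-1$, whose image is $\{k,k+1,\dots,k+2n-1\}$ (so the only vertex label omitted from $\{k,\dots,k+2n\}$ is the top one $k+2n$), and then choose the floor or the ceiling on each edge.

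First I would record the two easy families of edge labels. A pendant edge $u_iv_i$ has $\psi(u_i)\psi(v_i)=a(a-1)$ with $a=k+2i-1\ge 2$, and $a(a-1)$ lies strictly between $(a-1)^2$ and $a^2$, so its two possible labels are $k+2i-2$ (floor) and $k+2i-1$ (ceiling). A cycle edge $u_iu_{i+1}$ with $1\le i\le n-1$ has $\psi(u_i)\psi(u_{i+1})=(k+2i)^2-1$, so its two possible labels are $k+2i-1$ and $k+2i$. Thus the pendant edges can realise any transversal of the pairs $\{k,k+1\},\{k+2,k+3\},\dots,\{k+2n-2,k+2n-1\}$, and the $n-1$ non-wraparound cycle edges any transversal of $\{k+1,k+2\},\{k+3,k+4\},\dots,\{k+2n-3,k+2n-2\}$.

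The crux is the wraparound edge $u_nu_1$, whose product is $(k+1)(k+2n-1)=(k+n)^2-(n-1)^2$. Since $(k+1)(k+2n-1)>(k+1)^2$ and $(k+1)(k+2n-1)<(k+n)^2$ (both valid as $n\ge 2$), we get $\sqrt{(k+1)(k+2n-1)}\in(k+1,k+n)$, so $h:=\bigl\lfloor\sqrt{(k+1)(k+2n-1)}\bigr\rfloor$ satisfies $k+1\le h\le k+n-1$, and in particular $k\le h\le k+2n-1$. I would assign the label $h$ to $u_nu_1$ and then pick the floor/ceiling on the remaining $2n-1$ edges as follows: take the floor on every edge both of whose candidate labels lie below $h$, take the ceiling on every edge both of whose candidates lie above $h$, and on the (at most two) edges having $h$ as one candidate take the other candidate. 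A short check, splitting on the parity of $h-k$, shows the resulting $2n-1$ labels are exactly $\{k,\dots,k+2n-1\}\setminus\{h\}$, so the full edge-label set is $\{k,k+1,\dots,k+q-1\}$, as required.

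I expect the main obstacle to be precisely this wraparound bookkeeping: establishing the bounds $k+1\le h\le k+n-1$ (so that $h$ is a legal label not produced by any other edge) and then verifying, in the two parity cases, that the floor/ceiling recipe above neither repeats nor omits a label — one has to be a little careful at the extremes $h=k+1$ and $h=k+n-1$ and for the base case $n=3$. Everything else is the familiar "locate $\sqrt m$ between consecutive integers" arithmetic already used in Theorems~\ref{path} and~\ref{cycle}.
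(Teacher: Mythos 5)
Your proposal is correct and follows essentially the same route as the paper: the identical vertex labeling $\psi(v_i)=k+2i-2$, $\psi(u_i)=k+2i-1$, a special label $h\approx\sqrt{(k+1)(k+2n-1)}$ on the wraparound edge $u_nu_1$, and floors below $h$ versus ceilings above $h$ on the remaining edges. The only cosmetic differences are that you take $h$ as the floor rather than the ceiling of that square root and phrase the parity bookkeeping as one uniform recipe instead of the paper's two case tables, which your recipe reproduces exactly.
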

\begin{proof}Let $C_n$ be the cycle $u_1u_2\dots u_nu_1$ and let $v_i$ be the vertex adjacent to $u_i$ for $1\leq 1 \leq n$.

Define a function $\psi:V(C_n\odot K_1)\rightarrow \{k,k+1,\dots,k+2n=k+q\}$ by

\hspace{2cm}$\psi(u_i)=(k-1)+2i,\qquad1\leq i \leq n$

\hspace{2cm}$\psi(v_i)=(k-1)+2i-1,\qquad\qquad1\leq i \leq n$

Let $h=\left\lceil\sqrt{(k+1)(k+2n-1)}\right\rceil$. We have $k+1<h<k+2n-1$. 

Assign the label $h$ to edge $u_1u_n$. For each edge $uv$ in $C_n\odot K_1$ assign the label as follows\\
\textbf{Case 1: } $h=(k-1)+2j=k+2j-1$ for some $j \in \{2,3,\dots,n-1\}$
\begin{center}
\begin{tabular}{|c| c| c|}
\hline 
edge $uv$ & label & obtained labels \\\hline&&\\[-8pt]
$u_iu_{i+1},\quad1\leq i \leq j-1$&$\left\lfloor\sqrt{\psi(u)\psi(v)}\right\rfloor$&$\{k+1,k+3,\dots,k+2j-3\}$\\[5pt]\hline&&\\[-8pt]
$u_iu_{i+1},\quad j+1\leq i \leq n-1$&$\left\lceil\sqrt{\psi(u)\psi(v)}\right\rceil$&$\{k+2j,k+2j+2,\dots,k+2n-2\}$\\[5pt]\hline&&\\[-8pt]
$u_iv_i,\quad 1\leq i \leq j$&$\left\lfloor\sqrt{\psi(u)\psi(v)}\right\rfloor$&$\{k,k+2,\dots,k+2j-2\}$\\[5pt]\hline&&\\[-8pt]
$u_iv_i,\quad j+1\leq i \leq n$&$\left\lceil\sqrt{\psi(u)\psi(v)}\right\rceil$&$\{k+2j+1,k+2j+3,\dots,k+2n-1\}$\\[5pt]\hline
\end{tabular}
\end{center}
\vskip0.5cm
\noindent\textbf{Case 2: } $h=(k-1)+2j-1=k+2j-2$ for some $j \in \{2,3,\dots,n-1\}$
\begin{center}
\begin{tabular}{|c| c| c|}
\hline 
edge $uv$ & label & obtained labels \\\hline&&\\[-8pt]
$u_iu_{i+1},\quad1\leq i \leq j-1$&$\left\lfloor\sqrt{\psi(u)\psi(v)}\right\rfloor$&$\{k+1,k+3,\dots,k+2j-3\}$\\[5pt]\hline&&\\[-8pt]
$u_iu_{i+1},\quad j+1\leq i \leq n-1$&$\left\lceil\sqrt{\psi(u)\psi(v)}\right\rceil$&$\{k+2j,k+2j+2,\dots,k+2n-2\}$\\[5pt]\hline&&\\[-8pt]
$u_iv_i,\quad 1\leq i \leq j-1$&$\left\lfloor\sqrt{\psi(u)\psi(v)}\right\rfloor$&$\{k,k+2,\dots,k+2j-4\}$\\[5pt]\hline&&\\[-8pt]
$u_iv_i,\quad j\leq i \leq n$&$\left\lceil\sqrt{\psi(u)\psi(v)}\right\rceil$&$\{k+2j-1,k+2j+1,\dots,k+2n-1\}$\\[5pt]\hline
\end{tabular}
\end{center}
The sets of edge labels in both cases are $\{k,k+1,...,k+2n-1=k+q-1\}$. Hence $C_n\odot K_1$ is a $k$-geometric mean graph.
\end{proof}

Observe that the number of vertices and the number of edges are equal for cycles and crowns. Moreover, we can label vertices of cycles and crown without using the label $k+q$.

\begin{ex}
A $4$-geometric mean labeling  of $C_4\odot K_1$ is given below.
\end{ex}

\begin{center}
\begin{tikzpicture}
                            inner sep=0pt]
    \draw (0,0) node[draw,circle,fill=white,minimum size=4pt,
                                inner sep=0pt] (v1) [label=above left:$4$] {}
         ++(270:3cm) node[draw,circle,fill=white,minimum size=4pt,
                                     inner sep=0pt] (v2) [label=below left:$6$] {}
         ++(0:3cm) node[draw,circle,fill=white,minimum size=4pt,
                                     inner sep=0pt] (v3) [label=below right:$8$] {}
         ++(90:3cm) node[draw,circle,fill=white,minimum size=4pt,
                                     inner sep=0pt] (v4) [label=above right:$10$] {}
        ;
    \draw (0.8,-0.8) node[draw,circle,fill=white,minimum size=4pt,
                                inner sep=0pt] (u1) [label=below right:$5$] {}
         ++(270:1.4cm) node[draw,circle,fill=white,minimum size=4pt,
                                     inner sep=0pt] (u2) [label=above right:$7$] {}
         ++(0:1.4cm) node[draw,circle,fill=white,minimum size=4pt,
                                     inner sep=0pt] (u3) [label=above left:$9$] {}
         ++(90:1.4cm) node[draw,circle,fill=white,minimum size=4pt,
                                     inner sep=0pt] (u4) [label=below left:$11$] {}
        ;
    	\draw (1.5,-0.8) node (11) [label=above:{=\color{red}(8)}] {};     
	\draw (1.5,-0.3) node (12) [label=above:{$\left\lceil\sqrt{5\cdot 11}\right\rceil$}] {};
    	\draw (1.5,-2.7) node (21) [label=below:{=(5)}] {};     
	\draw (1.5,-2.2) node (22) [label=below:{$\left\lfloor\sqrt{5\cdot 7}\right\rfloor$}] {};
    	\draw (-0.2,-1.3) node (31) [label=below:{=(7)}] {};     
	\draw (-0.2,-0.8) node (32) [label=below:{$\left\lfloor\sqrt{7\cdot 9}\right\rfloor$}] {};
    	\draw (3.3,-1.3) node (41) [label=below:{=(10)}] {};     
	\draw (3.3,-0.8) node (42) [label=below:{$\left\lceil\sqrt{9\cdot 11}\right\rceil$}] {};
	\draw (-2,0.5) node (52) [label=above:{$\left\lfloor\sqrt{4\cdot 5}\right\rfloor$=(4)}] {};
	\draw (5,0.5) node (62) [label=above:{$\left\lceil\sqrt{10\cdot 11}\right\rceil$=(11)}] {};
	\draw (-2,-3) node (72) [label=below:{$\left\lfloor\sqrt{6\cdot 7}\right\rfloor$=(6)}] {};
	\draw (5,-3) node (82) [label=below:{$\left\lceil\sqrt{8\cdot 9}\right\rceil$=(9)}] {};
    	\draw (0.4,-0.5) node (p1)  {};   
    	\draw (2.6,-0.5) node (p2)  {};   
    	\draw (0.4,-2.5) node (p3)  {};   
    	\draw (2.6,-2.5) node (p4)  {};    
  \path (u1) edge [bend right]   (u2)
        (u2)	edge [bend right]   (u3)
        (u3) edge [bend right]   (u4)
        (u4)	edge [bend right]   (u1)
	(52) edge [->,thick,
           shorten <=2pt,
           shorten >=2pt] (p1)
	(62) edge[->,thick,
           shorten <=2pt,
           shorten >=2pt] (p2)
	(72) edge [->,thick,
           shorten <=2pt,
           shorten >=2pt](p3)
	(82) edge[->,thick,
           shorten <=2pt,
           shorten >=2pt](p4)
	(u1) edge     (v1)
	(u2) edge     (v2)
	(u3) edge     (v3)
        (u4) edge     (v4);
\end{tikzpicture}
\end{center}

\begin{thm}\label{comb}
For any $n\in \mathbb{N}$, $P_n\odot K_1$ is a $k$-geometric mean graph.
\end{thm}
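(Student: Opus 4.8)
The plan is to reuse the vertex labeling from the proof of Theorem~\ref{crown} and, as in the proof of Theorem~\ref{path}, to take the flooring function on every edge. Write the comb as the path $u_1u_2\cdots u_n$ together with pendant vertices $v_1,\dots,v_n$, where $v_i$ is adjacent to $u_i$; this graph has $p=2n$ vertices and $q=2n-1$ edges, namely the $n-1$ path edges $u_iu_{i+1}$ and the $n$ pendant edges $u_iv_i$. Hence a $k$-geometric mean labeling of $P_n\odot K_1$ must be a bijection onto $\{k,k+1,\dots,k+2n-1\}=\{k,\dots,k+q\}$ whose induced edge labels exhaust $\{k,k+1,\dots,k+2n-2\}=\{k,\dots,k+q-1\}$.

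I would define $\psi(u_i)=(k-1)+2i$ and $\psi(v_i)=(k-1)+2i-1$ for $1\le i\le n$, exactly as for the crown. Then the $u$-vertices receive $k+1,k+3,\dots,k+2n-1$ and the $v$-vertices receive $k,k+2,\dots,k+2n-2$, so $\psi$ is injective with image $\{k,\dots,k+q\}$; note that, in contrast with cycles and crowns, the top label $k+q=k+2n-1$ does get used (on $u_n$), since the comb has one more vertex than it has edges. Now assign to each edge the floor of the geometric mean of its endpoint labels. For a pendant edge $u_iv_i$, setting $b=k+2i-1$ gives $\psi(u_i)\psi(v_i)=b(b-1)$, and from $(b-1)^2\le b(b-1)<b^2$ (for $b\ge1$) the label is $b-1=k+2i-2$. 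For a path edge $u_iu_{i+1}$, setting $a=k+2i$ gives $\psi(u_i)\psi(u_{i+1})=(a-1)(a+1)=a^2-1$, and from $(a-1)^2\le a^2-1<a^2$ the label is $a-1=k+2i-1$.

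To finish, observe that as $i$ runs from $1$ to $n$ the pendant edges collect the labels $\{k,k+2,\dots,k+2n-2\}$, while as $i$ runs from $1$ to $n-1$ the path edges collect $\{k+1,k+3,\dots,k+2n-3\}$; these two progressions interleave to give exactly $\{k,k+1,\dots,k+2n-2\}=\{k,\dots,k+q-1\}$ with no value repeated, so $\psi$ is a $k$-geometric mean labeling. I do not expect a genuine obstacle here: the substance is just the two elementary floor identities $\lfloor\sqrt{m(m-1)}\rfloor=m-1$ and $\lfloor\sqrt{(m-1)(m+1)}\rfloor=m-1$ for integers $m\ge1$, together with the observation that the ``even-offset'' labels produced by the pendants and the ``odd-offset'' labels produced by the path exactly tile the target interval. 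A concrete check such as $n=3$, $k=1$, where $\psi(u_i)=2,4,6$, $\psi(v_i)=1,3,5$ and the edge labels come out $1,2,3,4,5$, could be added as an example.
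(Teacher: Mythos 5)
Your proof is correct and is essentially the paper's own argument with the two parity classes swapped: the paper assigns the odd-offset labels $(k-1)+2i-1$ to the spine vertices $u_i$ and the even-offset labels to the pendants $v_i$, then uses ceilings on spine edges and floors on pendant edges, whereas you interchange the roles and take floors throughout, producing the same interleaved edge-label set $\{k,\dots,k+q-1\}$. Both versions rest on the same elementary facts $\lfloor\sqrt{b(b-1)}\rfloor=b-1$ and $\lfloor\sqrt{a^{2}-1}\rfloor=a-1$ (the paper's ceiling variant $\lceil\sqrt{a^{2}-1}\rceil=a$), which you in fact verify more explicitly than the paper does, so no gap remains.
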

\begin{proof}Let $P_n$ be the path $u_1u_2\dots u_n$ and let $v_i$ be the vertex adjacent to $u_i$ for $1\leq 1 \leq n$.

Define a function $\psi:V(P_n\odot K_1)\rightarrow \{k,k+1,\dots,k+2n-1=k+q\}$ by

\hspace{2cm}$\psi(u_i)=(k-1)+2i-1,\qquad1\leq i \leq n$

\hspace{2cm}$\psi(v_i)=(k-1)+2i,\qquad\qquad1\geq i \geq n$

Assign the label $\left\lceil\sqrt{\psi(u_i)\psi(u_{i+1})}\right\rceil$ to edge $u_iu_{i+1}$, for $1\leq i \leq n-1$, and assign the label $\left\lfloor\sqrt{\psi(u_i)\psi(v_i)}\right\rfloor$ to edge $u_iv_i$, for $1\leq i \leq n$. Then the set of edge labels is $\{k,k+1,...,k+2n-2=k+q-1\}$. Hence $P_n\odot K_1$ is a $k$-geometric mean graph.
\end{proof}
\begin{ex}
A $3$-geometric mean labeling of $P_5\odot K_1$ is given below.
\end{ex}
\begin{center}
\begin{tikzpicture}
                            inner sep=0pt]
    \draw (0,0) node[draw,circle,fill=white,minimum size=4pt,
                                inner sep=0pt] (A) [label=above:$3$] {}
        -- ++(0:3cm) node[draw,circle,fill=white,minimum size=4pt,
                                     inner sep=0pt] (B) [label=above:$5$] {}
       --  ++(0:3cm) node[draw,circle,fill=white,minimum size=4pt,
                                     inner sep=0pt] (C) [label=above :$7$] {}
        -- ++(0:3cm) node[draw,circle,fill=white,minimum size=4pt,
                                     inner sep=0pt] (D) [label=above  :$9$] {}
       --  ++(0:3cm) node[draw,circle,fill=white,minimum size=4pt,
                                     inner sep=0pt] (E) [label=above:$11$] {}
       ++(173:2.5cm)  node (2) [label=right:{$=(10)$}] {}
       ++(180:3cm) node (3) [label=right:{$=(8)$}] {}
       ++(180:3cm) node (4) [label=right:{$=(6)$}] {}
       ++(180:3cm) node (5) [label=right:{$=(4)$}] {}
       ++(115:0.8cm)  node (2) [label=right:{$\lceil\sqrt{3\cdot 5}\rceil$}] {}
       ++(0:3cm) node (3) [label=right:{$\lceil\sqrt{5\cdot 7}\rceil$}] {}
       ++(0:3cm) node (4) [label=right:{$\lceil\sqrt{7\cdot 9}\rceil$}] {}
       ++(0:3cm) node (5) [label=right:{$\lceil\sqrt{9\cdot 11}\rceil$}] {}
        ;
   \draw (0,-2) node[draw,circle,fill=white,minimum size=4pt,
                                   inner sep=0pt] (F) [label=below:$4$] {}
         ++(0:3cm) node[draw,circle,fill=white,minimum size=4pt,
                                            inner sep=0pt] (G) [label=below:$6$] {} 
          ++(0:3cm) node[draw,circle,fill=white,minimum size=4pt,
                                                     inner sep=0pt] (H) [label=below:$8$]{}  
          ++(0:3cm) node[draw,circle,fill=white,minimum size=4pt, inner sep=0pt](I)[label=below:$10$] {}                                                                                                  ++(0:3cm) node[draw,circle,fill=white,minimum size=4pt,
                                                      inner sep=0pt] (J) [label=below:$12$]{};
  \draw (12,-0.8) node (11) [label=right:{$\left\lfloor\sqrt{11\cdot 12}\right\rfloor$ }] {}
   ++(180:3cm)node(12)[label=right:{$\left\lfloor\sqrt{9\cdot 10}\right\rfloor$ }]{}                                               ++(180:3cm)node(13)[label=right:{$\left\lfloor\sqrt{7\cdot 8}\right\rfloor$ }]{}
   ++(180:3cm)node(14)[label=right:{$\left\lfloor\sqrt{5\cdot 6}\right\rfloor$ }]{}
   ++(180:3cm)node(15)[label=right:{$\left\lfloor\sqrt{3\cdot 4}\right\rfloor$ }]{};     
     \draw (12.3,-1.4) node (11) [label=right:{=(11)}] {}
      ++(180:3cm)node(12)[label=right:{=(9)}]{}                                               ++(180:3cm)node(13)[label=right:{=(7)}]{}
      ++(180:3cm)node(14)[label=right:{=(5)}]{}
      ++(180:3cm)node(15)[label=right:{=(3)}]{};  
 \path (A) edge     (F)
        (B)	edge  (G)
        (C) edge    (H)
        (D)	edge    (I)
        (E) edge    (J);
\end{tikzpicture}
\end{center}

\begin{thm}\label{trisnake}
For any $n\in \mathbb{N}$, $T_n$ is a $k$-geometric mean graph.
\end{thm}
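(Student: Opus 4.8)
The plan is to imitate the constructions behind Theorems~\ref{comb} and~\ref{crown}: place the $n$ spine vertices of the underlying path $P_n$ on an arithmetic progression of common difference $3$, and insert the apex of each triangle directly ``above'' the left endpoint of that triangle. Write $T_n$ as the graph on vertices $u_1,\dots ,u_n$ (the spine) together with $w_1,\dots ,w_{n-1}$, where $w_i$ is adjacent to both $u_i$ and $u_{i+1}$; then $q=|E(T_n)|=3(n-1)$. The case $n=1$ gives $T_1=K_1$, which is trivially a $k$-geometric mean graph, so assume $n\ge 2$. Set $m_i:=k+3(i-1)$ for $1\le i\le n$ and define
\[
\psi(u_i)=m_i\quad(1\le i\le n),\qquad \psi(w_i)=m_i+1\quad(1\le i\le n-1).
\]

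The first step is to check that $\psi$ is a well-defined injection into $\{k,k+1,\dots ,k+q\}$: the labels $\psi(u_i)$ are exactly the $n$ elements of that set congruent to $k$ modulo $3$, the labels $\psi(w_i)$ are exactly the $n-1$ elements congruent to $k+1$ modulo $3$, the two blocks are disjoint, and the largest label used is $\psi(u_n)=k+3(n-1)=k+q$. The second step is to assign, for each triangle $i\in\{1,\dots ,n-1\}$,
\[
u_iw_i\mapsto\Bigl\lfloor\sqrt{\psi(u_i)\psi(w_i)}\Bigr\rfloor,\qquad
u_iu_{i+1}\mapsto\Bigl\lfloor\sqrt{\psi(u_i)\psi(u_{i+1})}\Bigr\rfloor,\qquad
w_iu_{i+1}\mapsto\Bigl\lceil\sqrt{\psi(w_i)\psi(u_{i+1})}\Bigr\rceil,
\]
and to verify from elementary inequalities between consecutive squares that these labels equal $m_i$, $m_i+1$, $m_i+2$ respectively. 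Indeed $m_i^2<m_i(m_i+1)<(m_i+1)^2$ yields the first; $(m_i+1)^2\le m_i(m_i+3)<(m_i+2)^2$ (the left inequality using $m_i\ge k\ge 1$) yields the second; and $(m_i+1)^2<(m_i+1)(m_i+3)<(m_i+2)^2$ yields the third. Finally, since the blocks $\{m_i,m_i+1,m_i+2\}$ for $i=1,\dots ,n-1$ are successive translates of $\{k,k+1,k+2\}$ by multiples of $3$, their union is exactly $\{k,k+1,\dots ,k+3(n-1)-1\}=\{k,\dots ,k+q-1\}$, so $\psi$ is a $k$-geometric mean labeling.

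I do not expect a genuine obstacle here: the only subtlety is that $\sqrt{m_i(m_i+3)}$ happens to be the integer $2$ when $m_i=1$, i.e.\ when $k=1$ and $i=1$, but its floor is still $m_i+1=2$, so the three edge-label computations hold uniformly in $k$ and $n$. If a more explicit presentation is wanted, one can record the edge assignments together with the resulting label sets in a table in the style of the proof of Theorem~\ref{crown}.
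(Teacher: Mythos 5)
Your proposal is correct and is essentially the paper's own proof: the labeling $\psi(u_i)=k+3(i-1)$, $\psi(w_i)=k+3(i-1)+1$ and the floor/floor/ceiling assignments on $u_iw_i$, $u_iu_{i+1}$, $w_iu_{i+1}$ are exactly those the paper uses (it tabulates the resulting label sets rather than verifying the square inequalities). Your explicit checks, including the boundary case $m_i=1$ where $m_i(m_i+3)$ is a perfect square, simply supply details the paper leaves implicit.
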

\begin{proof}Let $P_n$ be the path $u_1u_2\dots u_n$ and let $T_n$ be the triangular snake obtained from the path $P_n$ by joining $u_i$ and $u_{i+1}$ to new vertex $v_i$, for $1\leq i\leq n-1$.

Define a function $\psi:V(T_n)\rightarrow \{k,k+1,\dots,k+3n-3=k+q\}$ by

\hspace{2cm}$\psi(u_i)=(k-1)+3i-2,\qquad1\leq i \leq n$

\hspace{2cm}$\psi(v_i)=(k-1)+3i-1, \qquad1\leq i \leq n-1$


For each edge in $T_n$ assign the label as follows
\begin{center}
\begin{tabular}{|c| c| c|}
\hline 
edge $uv$ & label & obtained labels\\\hline&&\\[-8pt]
$u_iu_{i+1},\quad1\leq i \leq n-1$&$\left\lfloor\sqrt{\psi(u)\psi(v)}\right\rfloor$&$\{k+1,k+4,\dots,k+3n-5\}$\\[5pt]\hline&&\\[-8pt]
$u_iv_i,\quad1\leq i \leq n-1$&$\left\lfloor\sqrt{\psi(u)\psi(v)}\right\rfloor$&$\{k,k+3,k+6,\dots,k+3n-6\}$\\[5pt]\hline&&\\[-8pt]
$v_iu_{i+1},\quad1\leq i \leq n-1$&$\left\lceil\sqrt{\psi(u)\psi(v)}\right\rceil$&$\{k+2,k+5,\dots,k+3n-4\}$\\[5pt]\hline
\end{tabular}
\end{center}
Then the set of edge labels is $\{k,k+1,...,k+3n-4=k+q-1\}$. Hence $T_n$ is a $k$-geometric mean graph.
\end{proof}
\begin{ex}
A $5$-geometric mean labeling of $T_5$ is given below.
\end{ex}
\begin{center}
\begin{tikzpicture}
                            inner sep=0pt]
    \draw (0,0) node[draw,circle,fill=white,minimum size=4pt,
                                inner sep=0pt] (A) [label=below:$5$] {}
        -- ++(0:3cm) node[draw,circle,fill=white,minimum size=4pt,
                                     inner sep=0pt] (B) [label=below:$8$] {}
       --  ++(0:3cm) node[draw,circle,fill=white,minimum size=4pt,
                                     inner sep=0pt] (C) [label=below :$11$] {}
        -- ++(0:3cm) node[draw,circle,fill=white,minimum size=4pt,
                                     inner sep=0pt] (D) [label=below  :$14$] {}
       --  ++(0:3cm) node[draw,circle,fill=white,minimum size=4pt,
                                     inner sep=0pt] (E) [label=below:$17$] {}
        ;
   \draw (1.5,3) node[draw,circle,fill=white,minimum size=4pt,
                                   inner sep=0pt] (F) [label=above:$6$] {}
         ++(0:3cm) node[draw,circle,fill=white,minimum size=4pt,
                                            inner sep=0pt] (G) [label=above:$9$] {} 
          ++(0:3cm) node[draw,circle,fill=white,minimum size=4pt,
                                                     inner sep=0pt] (H) [label=above:$12$]{}  
          ++(0:3cm) node[draw,circle,fill=white,minimum size=4pt, inner sep=0pt](I)[label=above:$15$] {}                                                                                             ;
  \draw (9.3,-0.4) node (11) [label=right:{$\left\lfloor\sqrt{14\cdot 17}\right\rfloor$ }] {}
 ++(180:3cm)node(13)[label=right:{$\left\lfloor\sqrt{11\cdot 14}\right\rfloor$ }]{}
   ++(180:3cm)node(14)[label=right:{$\left\lfloor\sqrt{8\cdot 11}\right\rfloor$ }]{}
   ++(180:3cm)node(15)[label=right:{$\left\lfloor\sqrt{5\cdot 8}\right\rfloor$ }]{};     
 \draw (9.6,-1) node (11) [label=right:{=(15)}] {}
      ++(180:3cm)node(12)[label=right:{=(12)}]{}                                               ++(180:3cm)node(13)[label=right:{=(9)}]{}
      ++(180:3cm)node(14)[label=right:{=(6)}]{};  
\draw (0.6,2.8) node (41) [label=below:{ }] {}
++ (0:1.8cm) node (42) [label=below:{$\left\lceil\sqrt{6 \cdot8}\right\rceil$}] {}
++ (0:1.6cm) node (44) [label=below:{ }] {}
++ (0:1.2cm) node (43) [label=below:{$\left\lceil\sqrt{9 \cdot11}\right\rceil$}] {}
++ (0:1.7cm) node (46) [label=below:{}] {}
++ (0:1.4cm) node (45) [label=below:{$\left\lceil\sqrt{12 \cdot14}\right\rceil$}] {}
++ (0:1.8cm) node (48) [label=below:{ )}] {}
++ (0:1.3cm) node (47) [label=below:{$\left\lceil\sqrt{15 \cdot17}\right\rceil$}] {};
\draw (0.6,2.2) node (31) [label=below:{ }] {}
++ (0:1.8cm) node (32) [label=below:{=(7)}] {}
++ (0:1.2cm) node (33) [label=below:{ }] {}
++ (0:1.6cm) node (34) [label=below:{=(10)}] {}
++ (0:1.4cm) node (35) [label=below:{ }] {}
++ (0:1.7cm) node (36) [label=below:{=(13)}] {}
++ (0:1.3cm) node (37) [label=below:{ }] {}
++ (0:1.8cm) node (38) [label=below:{=(16)}] {};
\draw (0.6,1.8) node (31) [label=below:{$\left\lfloor\sqrt{5\cdot 6}\right\rfloor$}] {}
++ (0:1.8cm) node (32) [label=below:{ }] {}
++ (0:1.2cm) node (33) [label=below:{$\left\lfloor\sqrt{8\cdot 9}\right\rfloor$}] {}
++ (0:1.6cm) node (34) [label=below:{ }] {}
++ (0:1.4cm) node (35) [label=below:{$\left\lfloor\sqrt{11\cdot 12}\right\rfloor$}] {}
++ (0:1.7cm) node (36) [label=below:{ }] {}
++ (0:1.3cm) node (37) [label=below:{$\left\lfloor\sqrt{14\cdot 15}\right\rfloor$}] {}
++ (0:1.8cm) node (38) [label=below:{ }] {};
\draw (0.6,1.2) node (21) [label=below:{=(5)}] {}
++ (0:1.8cm) node (22) [label=below:{ }] {}
++ (0:1.2cm) node (23) [label=below:{=(8)}] {}
++ (0:1.6cm) node (24) [label=below:{ }] {}
++ (0:1.4cm) node (25) [label=below:{=(11)}] {}
++ (0:1.7cm) node (26) [label=below:{ }] {}
++ (0:1.3cm) node (27) [label=below:{=(14)}] {}
++ (0:1.8cm) node (28) [label=below:{ }] {};
 \path (A) edge     (F)
        (B)	edge  (G) edge     (F)
        (C) edge    (H) edge  (G)
        (D)	edge    (I) edge    (H)
        (E) edge    (I);
\end{tikzpicture}
\end{center}
\begin{thm}\label{quadsnake}
For any $n\in \mathbb{N}$, $Q_n$ is a $k$-geometric mean graph.
\end{thm}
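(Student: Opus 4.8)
The plan is to imitate the construction used for the triangular snake in Theorem~\ref{trisnake}: keep the original path edges, assign the three families of vertices labels in three fixed residue classes, and arrange that each replaced edge produces a block of four consecutive edge labels. First I would fix notation for $Q_n$: write $P_n=u_1u_2\cdots u_n$ and, for $1\le i\le n-1$, let $v_i,w_i$ be the two vertices added when the edge $u_iu_{i+1}$ is replaced, so that the $i$-th copy of $C_4$ is the cycle $u_iv_iw_iu_{i+1}u_i$ and $E(Q_n)$ consists of the path edges $u_iu_{i+1}$ together with $u_iv_i$, $v_iw_i$, $w_iu_{i+1}$ for $1\le i\le n-1$. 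Then $p=3n-2$ and $q=4n-4$, so a $k$-geometric mean labeling must take vertex values inside $\{k,k+1,\dots,k+4n-4\}$ and realize the edge-label set $\{k,k+1,\dots,k+4n-5\}$.

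Next I would define the injection by $\psi(u_i)=(k-1)+4i-3$ for $1\le i\le n$ and $\psi(v_i)=(k-1)+4i-2$, $\psi(w_i)=(k-1)+4i-1$ for $1\le i\le n-1$. I would note immediately that these $3n-2$ numbers are pairwise distinct --- the $u$'s, $v$'s and $w$'s occupy the residue classes $k$, $k+1$, $k+2$ modulo $4$ and each family is strictly increasing in $i$ --- and that they all lie in $\{k,\dots,k+q\}$, with $\psi(u_n)=k+q$.

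The only computational step is the edge labels. I would fix $i$, put $m=(k-1)+4i-3=k+4i-4$, so $m\ge k\ge1$ and $\psi(u_i)=m$, $\psi(v_i)=m+1$, $\psi(w_i)=m+2$, $\psi(u_{i+1})=m+4$, and then record the four elementary square-root estimates $\lfloor\sqrt{m(m+1)}\rfloor=m$, $\lfloor\sqrt{m(m+4)}\rfloor=m+1$, $\lceil\sqrt{(m+1)(m+2)}\rceil=m+2$, $\lceil\sqrt{(m+2)(m+4)}\rceil=m+3$, each obtained by squaring the candidate integer and using $m\ge1$ (for instance $(m+1)^2=m^2+2m+1\le m^2+4m<(m+2)^2$). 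I would then assign $u_iv_i\mapsto\lfloor\sqrt{\psi(u_i)\psi(v_i)}\rfloor=m$, $u_iu_{i+1}\mapsto\lfloor\sqrt{\psi(u_i)\psi(u_{i+1})}\rfloor=m+1$, $v_iw_i\mapsto\lceil\sqrt{\psi(v_i)\psi(w_i)}\rceil=m+2$, $w_iu_{i+1}\mapsto\lceil\sqrt{\psi(w_i)\psi(u_{i+1})}\rceil=m+3$, so that block $i$ contributes exactly $\{k+4i-4,k+4i-3,k+4i-2,k+4i-1\}$. Taking the union over $i=1,\dots,n-1$ gives $\{k,k+1,\dots,k+4n-5\}=\{k,\dots,k+q-1\}$, as required; this is naturally laid out in a table of the same shape as in the proof of Theorem~\ref{trisnake}.

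I do not expect a genuine obstacle: once the labeling is written down, the proof reduces to the four two-line inequalities above, which hold uniformly in $k$ and $i$ precisely because $m\ge1$ always. The only thing needing care is the bookkeeping --- choosing which edge of each $C_4$ receives a floor and which a ceiling so that the four labels of a block are consecutive and disjoint from the labels of the neighbouring blocks. The choice above (floors on $u_iv_i$ and $u_iu_{i+1}$, ceilings on $v_iw_i$ and $w_iu_{i+1}$) is exactly the one that produces the run $m,m+1,m+2,m+3$ with no repeats, and any off-by-one in the vertex labels or in the floor/ceiling pattern would break the disjointness across blocks.
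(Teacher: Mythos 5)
Your proposal is correct and follows essentially the same construction as the paper: the identical vertex labeling $\psi(u_i)=(k-1)+4i-3$, $\psi(v_i)=(k-1)+4i-2$, $\psi(w_i)=(k-1)+4i-1$, with each quadrilateral contributing a block of four consecutive edge labels. The only (harmless) difference is that within each block you swap which edge receives which label --- you take the floor on $u_iu_{i+1}$ (giving $m+1$) and the ceiling on $v_iw_i$ (giving $m+2$), whereas the paper takes the ceiling on $u_iu_{i+1}$ (giving $m+2$) and the floor on $v_iw_i$ (giving $m+1$); both choices yield the same label set $\{k,\dots,k+q-1\}$.
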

\begin{proof}Let $P_n$ be the path $u_1u_2\dots u_n$ and let $Q_n$ be the quadrilateral snake obtained from the path $P_n$ by joining $u_i$ and $u_{i+1}$ to new vertices $v_i,w_i$ in such a way that $u_iv_iw_iv_{i+1}$ is a path, for $1\leq i\leq n-1$.

Define a function $\psi:V(Q_n)\rightarrow \{k,k+1,\dots,k+4n-4=k+q\}$ by

\hspace{2cm}$\psi(u_i)=(k-1)+4i-3,\qquad1\leq i \leq n$

\hspace{2cm}$\psi(v_i)=(k-1)+4i-2, \qquad1\leq i \leq n-1$

\hspace{2cm}$\psi(w_i)=(k-1)+4i-1,\qquad1\leq i \leq n-1$


For each edge in $Q_n$ assign the label as follows
\begin{center}
\begin{tabular}{|c| c| c|}
\hline 
edge $uv$ & label & obtained labels \\\hline&&\\[-8pt]
$u_iu_{i+1},\quad1\leq i \leq n-1$&$\left\lceil\sqrt{\psi(u)\psi(v)}\right\rceil$&$\{k+2,k+6,\dots,k+4n-6\}$\\[5pt]\hline&&\\[-8pt]
$u_iv_i,\quad1\leq i \leq n-1$&$\left\lfloor\sqrt{\psi(u)\psi(v)}\right\rfloor$&$\{k,k+4,k+8,\dots,k+4n-8\}$\\[5pt]\hline&&\\[-8pt]
$v_iw_i,\quad1\leq i \leq n-1$&$\left\lfloor\sqrt{\psi(u)\psi(v)}\right\rfloor$&$\{k+1,k+5,\dots,k+4n-7\}$\\[5pt]\hline&&\\[-8pt]
$w_iu_{i+1},\quad1\leq i \leq n-1$&$\left\lceil\sqrt{\psi(u)\psi(v)}\right\rceil$&$\{k+3,k+7,\dots,k+4n-5\}$\\[5pt]\hline
\end{tabular}
\end{center}
Then the set of edge labels is $\{k,k+1,...,k+4n-5=k+q-1\}$. Hence $Q_n$ is a $k$-geometric mean graph.
\end{proof}
\begin{ex}
A $6$-geometric mean labeling of $Q_4$ is given below.
\end{ex}
\begin{center}
\begin{tikzpicture}
                            inner sep=0pt]
    \draw (0,0) node[draw,circle,fill=white,minimum size=4pt,
                                inner sep=0pt] (A) [label=below:$6$] {}
        -- ++(0:4cm) node[draw,circle,fill=white,minimum size=4pt,
                                     inner sep=0pt] (B) [label=below:$10$] {}
       --  ++(0:4cm) node[draw,circle,fill=white,minimum size=4pt,
                                     inner sep=0pt] (C) [label=below :$14$] {}
        -- ++(0:4cm) node[draw,circle,fill=white,minimum size=4pt,
                                     inner sep=0pt] (D) [label=below  :$18$] {};
  \draw (1,3) node[draw,circle,fill=white,minimum size=4pt,
                                inner sep=0pt] (v1) [label=above:$7$] {}
         -- ++(0:2cm) node[draw,circle,fill=white,minimum size=4pt,
                                     inner sep=0pt] (w1) [label=above:$8$] {}
         ++(0:2cm) node[draw,circle,fill=white,minimum size=4pt,
                                     inner sep=0pt] (v2) [label=above:$11$] {}
         -- ++(0:2cm) node[draw,circle,fill=white,minimum size=4pt,
                                     inner sep=0pt] (w2) [label=above:$12$] {}
         ++(0:2cm) node[draw,circle,fill=white,minimum size=4pt,
                                     inner sep=0pt] (v3) [label=above:$15$] {} 
        -- ++(0:2cm) node[draw,circle,fill=white,minimum size=4pt,
                                     inner sep=0pt] (w3) [label=above:$16$] {}
        ;
  \draw (2,2.8) node (11) [label=above:{=(7)}] {}
          ++(0:4cm) node (12) [label=above:{=(11)}] {}
         ++(0:4cm) node(13) [label=above:{=(15)}] {};
  \draw (2,3.4) node(21) [label=above:$\left\lfloor\sqrt{7\cdot8}\right\rfloor$] {}
          ++(0:4cm) node(22) [label=above:$\left\lfloor\sqrt{11\cdot12}\right\rfloor$] {}
         ++(0:4cm) node (23) [label=above:$\left\lfloor\sqrt{15\cdot16}\right\rfloor$] {};
 \draw (0.6,0.2) node (41) [label=above:{=(6)}] {}
          ++(0:4cm) node (42) [label=above:{=(10)}] {}
         ++(0:4cm) node(43) [label=above:{=(14)}] {};
  \draw (0.6,0.8) node(51) [label=above:$\left\lfloor\sqrt{6\cdot7}\right\rfloor$] {}
          ++(0:4cm) node(52) [label=above:$\left\lfloor\sqrt{10\cdot11}\right\rfloor$] {}
         ++(0:4cm) node (53) [label=above:$\left\lfloor\sqrt{14\cdot15}\right\rfloor$] {};
 \draw (3.2,1.1) node (41) [label=above:{=(9)}] {}
          ++(0:4cm) node (42) [label=above:{=(13)}] {}
         ++(0:4cm) node(43) [label=above:{=(17)}] {};
  \draw (3.2,1.7) node(51) [label=above:$\left\lceil\sqrt{8\cdot10}\right\rceil$] {}
          ++(0:4cm) node(52) [label=above:$\left\lceil\sqrt{12\cdot14}\right\rceil$] {}
         ++(0:4cm) node (53) [label=above:$\left\lceil\sqrt{16\cdot18}\right\rceil$] {};
  \draw (2,0) node(1) [label=below:{$\left\lceil\sqrt{6 \cdot10}\right\rceil$=(8)}] {}
          ++(0:4cm) node (2) [label=below:{$\left\lceil\sqrt{10 \cdot14}\right\rceil$=(12)}] {}
         ++(0:4cm) node(3) [label=below:{$\left\lceil\sqrt{14 \cdot18}\right\rceil$=(16)}] {};
 \path (A) edge     (v1)
        (B)	edge  (v2) edge(w1)
        (C) edge    (v3) edge(w2)
        (D)	edge    (w3);
\end{tikzpicture}
\end{center}
\subsection{Join of graphs}
From Theorem \ref{comnongeo}, we certain that a graph with more than one component from the set of stars, paths, combs, triangle snakes, or quadrilateral snakes cannot be a geometric mean graph. Here we state a more general result on finite join of graphs.
\begin{thm}A graph $G$ obtained from a finite join of 
cycles and crowns, join with at most one path, comb, triangle snake, or quadrilateral snake, 
is a geometric mean graph. 
\end{thm}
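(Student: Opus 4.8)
The plan is to assemble a global labeling by concatenating $k$-geometric mean labelings of the individual components, with the parameters $k$ chosen so that consecutive blocks of edge labels abut. Write $G = G_1 \cup G_2 \cup \dots \cup G_t$, where each $G_i$ is a cycle, a crown, or the (at most one) path, comb, triangle snake, or quadrilateral snake; here $\cup$ is the union of the excerpt's Definition of join on vertex-disjoint graphs. Reorder the components so that, if a path/comb/triangle snake/quadrilateral snake occurs, it is the last one, $G_t$; then $G_1,\dots,G_{t-1}$ are all cycles or crowns. Set $q_i = |E(G_i)|$, $Q_0 = 0$, and $Q_j = q_1 + \dots + q_j$, so $q = |E(G)| = Q_t$.

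First I would invoke the per-component theorems. For $1 \le j \le t$ put $k_j = Q_{j-1} + 1$. If $G_j$ is a cycle or a crown, Theorem~\ref{cycle} or Theorem~\ref{crown} supplies a $k_j$-geometric mean labeling $\psi_j$ of $G_j$; if $G_j = G_t$ is a path, comb, triangle snake, or quadrilateral snake, Theorem~\ref{path}, \ref{comb}, \ref{trisnake}, or \ref{quadsnake} does the same. In each case the edge-label set produced is $\{k_j,\dots,k_j+q_j-1\} = \{Q_{j-1}+1,\dots,Q_j\}$, so as $j$ runs over $1,\dots,t$ these sets partition $\{1,\dots,q\}$. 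Define $\psi$ on $V(G)$ to coincide with $\psi_j$ on $V(G_j)$ for each $j$, and give each edge of $G_j$ the label dictated by $\psi_j$; then the overall edge-label set is exactly $\{1,\dots,q\}$.

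The step needing care is the injectivity of $\psi$. A priori $\psi_j$ takes values in $\{k_j,\dots,k_j+q_j\} = \{Q_{j-1}+1,\dots,Q_j+1\}$, and the block for $G_{j+1}$ starts at $Q_j+1$, so the naive ranges overlap in the single value $Q_j+1$. Here I would use the observation recorded just after Theorem~\ref{crown}: the labelings of Theorems~\ref{cycle} and~\ref{crown} never use the top vertex label $k_j+q_j = Q_j+1$. Hence for $j<t$ the image of $\psi_j$ lies in $\{Q_{j-1}+1,\dots,Q_j\}$, while the image of $\psi_t$ lies in $\{Q_{t-1}+1,\dots,Q_t+1\}$; these $t$ sets are pairwise disjoint with union contained in $\{1,\dots,Q_t+1\} = \{1,\dots,q+1\}$. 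Thus $\psi$ is a well-defined injection $V(G)\to\{1,\dots,q+1\}$ realizing the edge-label set $\{1,\dots,q\}$, i.e.\ a geometric mean labeling of $G$. (If no path/comb/snake component is present, every block avoids its top label, the union of images lies in $\{1,\dots,q\}$, and the same argument works with $t$ cycle/crown blocks in any order; the degenerate cases $t\le 1$ reduce directly to the theorems of Section~2.2.)

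The main obstacle is precisely this boundary collision: a component's $k$-geometric mean labeling needs $q_j+1$ available vertex labels to realize $q_j$ consecutive edge labels, so two adjacent components each want the shared endpoint value. The resolution is that cycles and crowns are thrifty — they realize their $q_j$ edge labels using only $q_j$ vertex labels — which is exactly why the theorem permits arbitrarily many cycle and crown summands but at most one of the path-like summands, each of which genuinely consumes the extra label. It is also worth noting that the hypothesis forces $|V(G)| \le |E(G)|+1$, in line with Lemma~\ref{notgeo}.
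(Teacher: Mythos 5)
Your proposal is correct and follows essentially the same route as the paper: decompose $G$ into components with the path/comb/snake component placed last, give component $G_j$ the $\bigl(\sum_{i<j}q_i+1\bigr)$-geometric mean labeling from Theorems~\ref{path}--\ref{quadsnake}, and invoke the observation after Theorem~\ref{crown} that the cycle and crown labelings avoid the top vertex label $k+q$, so the vertex-label sets are pairwise disjoint. Your write-up merely makes explicit the boundary-collision argument that the paper compresses into one sentence.
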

\begin{proof}
Let $G$ be a graph with $n$ components, obtained from a finite join of cycles and crowns, join with at most one path, comb, triangle snake, or quadrilateral snake. 

Let $G_1, G_2,\dots, G_n$ be $n$ pairwise distinct components of $G$, i.e. $G=G_1+G_2+...+G_n$, and let $q_i=|E(G_i)|$, for $1 \leq i \leq n$. If there exists a component of $G$ that is a path, a comb, a triangle snake, or a quadrilateral snake, let that component be $G_n$.
 
Let $q_0=0$. From Theorems \ref{path}, \ref{cycle}, \ref{comb}, \ref{crown}, \ref{trisnake} and \ref{quadsnake},\\ $G_i$ is a  $\left(\displaystyle\sum_{j=0}^{i-1}q_j+1\right)$-geometric mean graph for any $j \in \{1,2,\dots,n\}$. \\Moreover, with the $\left(\displaystyle\sum_{j=0}^{i-1}q_j+1\right)$-geometric mean labeling constructed as in the proof of Theorems \ref{cycle} and \ref{crown} the vertices labels set of $G_i$ are disjoint and hence the result follows. 
\end{proof}
By setting $q_0$ in the proof of the above Theorem to $k-1$, be obtain the following corollary.
\begin{cor}
A graph $G$ obtained from a finite join of 
cycles and crowns, join with at most one path, comb, triangle snake, or quadrilateral snake, 
is a $k$-geometric mean graph. 
\end{cor}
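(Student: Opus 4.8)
The plan is to recognise that the Corollary is exactly the preceding Theorem with its indexing shifted by a constant, and that the construction used there is already written in a form that accepts an arbitrary starting value. So I would reuse, verbatim, the decomposition $G = G_1 + G_2 + \dots + G_n$ with $q_i = |E(G_i)|$ from the proof of that Theorem: each $G_i$ is a cycle, crown, path, comb, triangle snake, or quadrilateral snake, and if a path, comb, triangle snake, or quadrilateral snake occurs among the components I take it to be $G_n$. The single modification is to set $q_0 = k-1$ rather than $q_0 = 0$.

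With this choice, $G_i$ is to be labelled as a $\left(\sum_{j=0}^{i-1} q_j + 1\right)$-geometric mean graph, which is legitimate by Theorems \ref{path}, \ref{cycle}, \ref{crown}, \ref{comb}, \ref{trisnake}, and \ref{quadsnake}; in particular $G_1$ is a $k$-geometric mean graph since $q_0 + 1 = k$. I would then check the two bookkeeping points. First, the edge-label blocks tile an interval: $G_i$ contributes the block $\left\{\sum_{j=0}^{i-1}q_j + 1, \dots, \sum_{j=0}^{i}q_j\right\}$, and since $\sum_{j=0}^{n}q_j = (k-1) + \sum_{i=1}^{n}q_i = k-1+q$, the union of all edge labels is $\{k, k+1, \dots, k+q-1\}$, as required by Definition \ref{kgeo}. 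Second, the vertex-label sets must be pairwise disjoint and contained in $\{k, \dots, k+q\}$; here the key is the remark made just after Theorem \ref{crown}, namely that the labelings of a cycle and of a crown produced in Theorems \ref{cycle} and \ref{crown} never use the top value $k_i + q_i$ of their allotted range, so two consecutive blocks $G_i, G_{i+1}$ (both cycles or crowns) share no vertex label, while the lone trailing path/comb/snake $G_n$ is the only component allowed to use its top value, which equals $\sum_{j=0}^{n}q_j + 1 = k+q$, and nothing follows it.

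The main --- essentially the only --- point to be careful about is that disjointness of the vertex-label sets is genuinely inherited under the shift $q_0 = 0 \mapsto q_0 = k-1$. This is immediate once one notes that the cycle and crown constructions assign affine formulas such as $\psi(u_i) = (k_i - 1) + i$ for a cycle and $\psi(u_i) = (k_i - 1) + 2i$, $\psi(v_i) = (k_i - 1) + 2i - 1$ for a crown, where $k_i$ is the base of component $G_i$: the set of values used is literally the corresponding $k=1$ set translated by a constant, so the ``avoids the top value'' property, and hence the disjointness, is preserved without any new case analysis. Therefore the result follows from the Theorem just proved by this reparametrisation.
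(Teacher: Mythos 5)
Your proposal is correct and is essentially the paper's own argument: the paper proves the corollary precisely by taking the proof of the preceding theorem and setting $q_0=k-1$, exactly as you do. Your additional bookkeeping (the edge-label blocks tiling $\{k,\dots,k+q-1\}$ and the vertex-label disjointness via the observation that the cycle and crown labelings avoid their top value) only makes explicit what the paper leaves implicit.
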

\begin{ex}
A geometric mean labeling of $C_5+C_3+(C_4\odot K_1)+P_4$ is given below.
\end{ex}
\begin{center}\begin{tikzpicture}
                            inner sep=0pt]
    \draw (0,0) node[draw,circle,fill=white,minimum size=4pt,
                                inner sep=0pt] (A) [label=above:$1$] {}
         ++(216:1.5cm) node[draw,circle,fill=white,minimum size=4pt,
                                     inner sep=0pt] (B) [label=left:$2$] {}
         ++(288:1.5cm) node[draw,circle,fill=white,minimum size=4pt,
                                     inner sep=0pt] (C) [label=below left:$3$] {}
         ++(0:1.5cm) node[draw,circle,fill=white,minimum size=4pt,
                                     inner sep=0pt] (D) [label=below right:$4$] {}
         ++(72:1.5cm) node[draw,circle,fill=white,minimum size=4pt,
                                     inner sep=0pt] (E) [label=right:$5$] {};
	\draw (0,-0.8) node (c5) [label=below:{$C_5$}] {};
\draw (1.2,0.4) node (11) [label=above:{$\left\lceil\sqrt{1\cdot 5}\right\rceil$}] {};
\draw (1.2,-0.2) node (12) [label=above:{=\color{red}(3)}] {};
\draw (-1.2,0.4) node (21) [label=above:{$\left\lfloor\sqrt{1\cdot 2}\right\rfloor$}] {};
\draw (-1.2,-0.2) node (22) [label=above:{=(1)}] {};
\draw (1.2,-1.5) node (31) [label=right:{$\left\lceil\sqrt{4\cdot 5}\right\rceil$}] {};
\draw (1.4,-2.1) node (32) [label=right:{=(5)}] {};
\draw (-1.2,-1.5) node (41) [label=left:{$\left\lfloor\sqrt{2\cdot 3}\right\rfloor$}] {};
\draw (-1.4,-2.1) node (42) [label=left:{=(2)}] {};
\draw (0,-2.4) node (51) [label=below:{$\left\lceil\sqrt{3\cdot 4}\right\rceil$}] {};
\draw (0,-3) node (52) [label=below:{=(4)}] {};

  \path (A) edge [bend right]   (B)
        (B)	edge [bend right]   (C)
        (C) edge [bend right]   (D)
        (D)	edge [bend right]   (E)
        (E) edge [bend right]   (A);

                            inner sep=0pt]
    \draw (6,0) node[draw,circle,fill=white,minimum size=4pt,
                                inner sep=0pt] (A) [label=above:$6$] {}
         ++(270:1.5cm) node[draw,circle,fill=white,minimum size=4pt,
                                     inner sep=0pt] (B) [label=below:$7$] {}
         ++(30:1.5cm) node[draw,circle,fill=white,minimum size=4pt,
                                     inner sep=0pt] (C) [label=right:$8$] {}      
         ++(130:1cm) node  (1) [label=right:{$\left\lceil\sqrt{6\cdot 8}\right\rceil$=\color{red}(7)}] {}
		++(270:1.5cm) node  (3) [label=right:{$\left\lceil\sqrt{7\cdot 8}\right\rceil$=(8)}] {}
		++(150:0.8cm) node  (2) [label=left:{=(6)}] {}
		++(90:0.6cm) node  (2) [label=left:{$\left\lfloor\sqrt{6\cdot 7}\right\rfloor$}] {}
        ;
	\draw (6,-0.8) node (c3) [label=right:{$C_3$}] {};
  \path (A) edge [bend right]   (B)
        (B)	edge [bend right]   (C)
        (C) edge [bend right]   (A);
	\draw (3,-3.8) node (path) [label=right:{$P_4$}] {};
    \draw (4,-4) node[draw,circle,fill=white,minimum size=4pt,
                                inner sep=0pt] (v1) [label= below:$17$] {}
         -- ++(0:1.8cm) node[draw,circle,fill=white,minimum size=4pt,
                                     inner sep=0pt] (v2) [label= below:$18$] {}
         -- ++(0:1.8cm) node[draw,circle,fill=white,minimum size=4pt,
                                     inner sep=0pt] (v3) [label= below:$19$] {}
         -- ++(0:1.8cm) node[draw,circle,fill=white,minimum size=4pt,
                                     inner sep=0pt] (v4) [label=below:$20$] {}
        ;
 	\draw (4.9,-3.6) node (12) [label=above:{$\left\lfloor\sqrt{17\cdot 18}\right\rfloor$}] {};
    	\draw (4.9,-4) node (11) [label=above:{=(17)}] {};   
 	\draw (6.7,-3.6) node (22) [label=above:{$\left\lfloor\sqrt{18\cdot 19}\right\rfloor$}] {};
    	\draw (6.7,-4) node (21) [label=above:{=(18)}] {};   
 	\draw (8.5,-3.6) node (32) [label=above:{$\left\lfloor\sqrt{19\cdot 20}\right\rfloor$}] {};
    	\draw (8.5,-4) node (31) [label=above:{=(21)}] {};   
\end{tikzpicture}\qquad
\begin{tikzpicture}
                            inner sep=0pt]
    \draw (0,0) node[draw,circle,fill=white,minimum size=4pt,
                                inner sep=0pt] (v1) [label=above left:$9$] {}
         ++(270:3cm) node[draw,circle,fill=white,minimum size=4pt,
                                     inner sep=0pt] (v2) [label=below left:$11$] {}
         ++(0:3cm) node[draw,circle,fill=white,minimum size=4pt,
                                     inner sep=0pt] (v3) [label=below right:$13$] {}
         ++(90:3cm) node[draw,circle,fill=white,minimum size=4pt,
                                     inner sep=0pt] (v4) [label=above right:$15$] {}
        ;
    \draw (0.8,-0.8) node[draw,circle,fill=white,minimum size=4pt,
                                inner sep=0pt] (u1) [label=left:$10$] {}
         ++(270:1.4cm) node[draw,circle,fill=white,minimum size=4pt,
                                     inner sep=0pt] (u2) [label=left:$12$] {}
         ++(0:1.4cm) node[draw,circle,fill=white,minimum size=4pt,
                                     inner sep=0pt] (u3) [label=right:$14$] {}
         ++(90:1.4cm) node[draw,circle,fill=white,minimum size=4pt,
                                     inner sep=0pt] (u4) [label=right:$16$] {}
        ;
	\draw (1.5,-1.1) node (c5) [label=below:{$C_4\odot K_1$}] {};
    \draw (1.5,-0.8) node (11) [label=above:{=\color{red}(14)}] {};     
	\draw (1.5,-0.3) node (12) [label=above:{$\left\lceil\sqrt{10\cdot 16}\right\rceil$}] {};
    \draw (1.5,-2.7) node (21) [label=below:{=(12)}] {};     
	\draw (1.5,-2.2) node (22) [label=below:{$\left\lfloor\sqrt{12\cdot 14}\right\rfloor$}] {};
    \draw (3.4,-1.3) node (31) [label=below:{=(15)}] {};     
	\draw (3.4,-0.8) node (32) [label=below:{$\left\lceil\sqrt{14\cdot 16}\right\rceil$}] {};
    \draw (-0.4,-1.3) node (41) [label=below:{=(10)}] {};     
	\draw (-0.4,-0.8) node (42) [label=below:{$\left\lfloor\sqrt{10\cdot 12}\right\rfloor$}] {}; 
	\draw (-2,0.3) node (52) [label=above:{$\left\lfloor\sqrt{9\cdot 10}\right\rfloor$=(9)}] {};  
	\draw (5,0.3) node (62) [label=above:{$\left\lceil\sqrt{15\cdot 16}\right\rceil$=(16)}] {};  
	\draw (-2.2,-3) node (72) [label=below:{$\left\lfloor\sqrt{11\cdot 12}\right\rfloor$=(11)}] {};  
	\draw (5.2,-3) node (82) [label=below:{$\left\lfloor\sqrt{13\cdot 14}\right\rfloor$=(13)}] {};
    	\draw (0.4,-0.5) node (p1)  {};   
    	\draw (2.6,-0.5) node (p2)  {};   
    	\draw (0.4,-2.5) node (p3)  {};   
    	\draw (2.6,-2.5) node (p4)  {};    
  \path (u1) edge [bend right]   (u2)
        (u2)	edge [bend right]   (u3)
        (u3) edge [bend right]   (u4)
        (u4)	edge [bend right]   (u1)
	(52) edge [->,thick,
           shorten <=2pt,
           shorten >=2pt] (p1)
	(62) edge[->,thick,
           shorten <=2pt,
           shorten >=2pt] (p2)
	(72) edge [->,thick,
           shorten <=2pt,
           shorten >=2pt](p3)
	(82) edge[->,thick,
           shorten <=2pt,
           shorten >=2pt](p4)
	(u1) edge     (v1)
	(u2) edge     (v2)
	(u3) edge     (v3)
        (u4) edge     (v4);
\end{tikzpicture}
\end{center}
\section{Conclusion}
A new type of labeling, namely $k$-geometric mean labeling, is introduced in this paper as a tool to proof the geometric mean labeling of joins of graphs. We first presented some new geometric mean graph. Then, we define $k$-geometric mean labeling and study the geometric mean labeling on some classes of graphs. Finally, we provided a general result on finite joins of graphs. 

\bibliographystyle{plainnat}
\bibliography{geomean}

\end{document}